\numberwithin{equation}{section}
\theoremstyle{plain}
\newtheorem{thm}{Theorem}[section]
\newtheorem{prop}[thm]{Proposition}
\newtheorem{lem}[thm]{Lemma}
\newtheorem{cor}[thm]{Corollary}
\newtheorem{fact}[thm]{Fact}
\theoremstyle{definition}
\newtheorem{defn}[thm]{Definition}
\newtheorem{exmp}[thm]{Example}
\newtheorem{rem}[thm]{Remark}
\newtheorem{defns-rems}[thm]{Definitions and Remarks}
\newtheorem{notas-rems}[thm]{Notations and Remarks}
\newtheorem{exmps-rems}[thm]{Examples and Remarks}
\newcommand{\Hom}{\mbox{Hom}\, }
\newcommand{\Ext}{\mbox{Ext}\, }
\newcommand{\g}{\mbox{G}}
\newcommand{\fm}{\mathfrak{m}}
\begin{document}
\bibliographystyle{amsplain}


\title[Auslander class, $\g_C$ and $C$--projective]{Auslander
class, $\g_C$ and $C$--projective modules \\modulo exact
zero-divisors}

\bibliographystyle{amsplain}
     \author[E. Amanzadeh]{Ensiyeh Amanzadeh$^1$}
     \author[M. T. Dibaei]{Mohammad T. Dibaei$^{2}$}

\address{ $^{1,  2}$ Faculty of Mathematical Sciences and Computer,
Kharazmi University,  Tehran,  Iran; and School of Mathematics,
Institute for Research in Fundamental Sciences (IPM),  P.O. Box:
19395-5746,  Tehran,  Iran. }

\email{dibaeimt@ipm.ir}
 \email{en.amanzadeh@gmail.com}

\keywords{semidualizing,  $\g _C$--projective,  $C$--projective,  $C$--flat,  $C$--injective,  Auslander class,  Bass class,  exact zero--divisor .\\
M.T. Dibaei was supported in part by a grant from IPM
(No.$92130110$).}

 \subjclass[2000]{13D07}


\begin{abstract}
For a semidualizing module $C$ over a ring $R$, we study the
following classes  modulo exact zero divisors: $\g_C$--projectives,
$\mathcal G_C$; the Auslander
 class $\mathcal A_C$; the Bass class $\mathcal B_C$;
 $\mathcal{P}_C$--projective;
  $ {\mathcal F}_C$--projective; and $ {\mathcal I}_C$--injective dimensions.
\end{abstract}


\maketitle


\section{Introduction} \label{sec1}

Throughout  $R$ is a commutative noetherian ring with identity element.
In this paper,  we discuss the Auslander class and the Bass class  with respect to a semidualizing $R$--module modulo exact zero--divisors.
Foxby \cite{f} and Vasconcelos \cite{v} independently initiated the study of semidualizing modules.
A finite (i.e. finitely generated) $R$--module $C$ is called {\it semidualizing} if the natural homothety map  $\chi_C^R: R \longrightarrow {\rm Hom}_R(C,  C)$ is an isomorphism and ${\rm Ext}^{>0}_R(C,  C)=0$ {\rm (}see \cite[Definition 1.1]{hj}{\rm )}.  For example every finite projective $R$--module of rank 1 is semidualizing.
A semidualizing $R$--module $C$ gives rise to two full subcategories of the
category of $R$--modules,  namely the Auslander class $\mathcal A_C$ and the Bass class $\mathcal B_C$ defined by Avramov
and Foxby in \cite{f} and \cite{af} (see Definition \ref{h}).
Semidualizing modules and their Auslander and Bass classes have caught attentions of several
authors (see for instance \cite{f}, \cite{c} and \cite{hw}).

According to \cite{hs},  an element $x$ of $R$ is said to be an {\it exact zero--divisor} if $R\neq (0:_R x)\cong R/xR\neq 0$.
Note that in this case if $(0:_R x)=yR$,  then we say that $x,  y$ form a pair of exact zero--divisors on $R$.
This notion has been studied in \cite{hs},  \cite{h},  \cite{ahs},  \cite{bcj} and \cite{dg}.
The concept of exact zero--divisor on a module has been studied in \cite{hs},  \cite{ahs}  and \cite{dg}.

After the introduction,  in Section \ref{sec2},  we bring the necessary tools which are needed for our development.
In Section \ref{sec3},  we study the effect of an exact zero--divisor on a semidualizing module $C$ (see Proposition \ref{1} and Proposition \ref{B}).
In Section \ref{sec4},  we consider the classes $\mathcal A_C$,  $\mathcal B_C$ and $\mathcal G_C$ modulo an exact zero--divisor,
where the class $\mathcal G_C$ denote the class of $\g_C$--{\it projective} modules (see Definition \ref{g}),  and show that if $x$ is an exact zero--divisor on both $R$ and $C$ then $R/xR$ belongs to $\mathcal G_C$ and $\mathcal A_C$ (see Propositions \ref{A} and \ref{C}).
In this section,  we deal with our main goal,  i.e. whether $M/xM$ belongs to the class $\mathcal G_C$,  $\mathcal B_C$,  or
$\mathcal A_C$,  where $x$ is an exact zero--divisor on both $R$ and $R$--module $M$ (see Proposition \ref{J}).
In the final section, \ref{sec5},  we study more closely the class $\mathcal P_C$ of $C$--{\it projective} $R$--modules
which has been studied in \cite{hw} and \cite{tw} before.
We show that if $x$ is an exact zero--divisor on both $R$ and $C$,  then $x$ is an exact zero--divisor on an $R$--module $M$ whenever
$\mathcal P_C$--{\rm pd}$(M)<\infty$,  which implies that
$\mathcal P_{C/xC}$--{\rm pd}$(M/xM)\leq \mathcal P_C$--{\rm pd}$(M)$ (see Propositions \ref{F} and \ref{G}).

\section{Preliminaries} \label{sec2}

Throughout the paper,  $C$ is a semidualizing $R$--module. First we
recall the definition of exact zero--divisor from \cite[Definition]{hs}
and \cite[Definition 2.3]{dg}.
\begin{defn} \label{k}
Let $M$ be an $R$--module. An element $x$  of $R$ is called an {\it exact zero}--{\it divisor} on $M$ if  $xM\neq 0$,  $xM\neq M$
and there is $y\in R$ such that the sequence of multiplication maps $ M\stackrel{x}{\longrightarrow } M \stackrel{y}{\longrightarrow } M\stackrel{x}{\longrightarrow } M$ is exact. In this case we say that
$x,  y$ form a {\it pair} of exact zero--divisors on $M$.
\end{defn}

Here we recall some facts about a pair of exact zero--divisors.

\begin{fact} \cite[ Proposition 2.4]{dg} \label{a}
If $x$ is an exact zero--divisor on an $R$--module $M$,  then $0:_M x\cong M/xM$.
\end{fact}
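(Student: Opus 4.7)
The plan is to unpack the exactness hypothesis in the four-term sequence and then apply the first isomorphism theorem to one of the multiplication maps. Writing the given exact sequence as
\[
M \stackrel{x}{\longrightarrow} M \stackrel{y}{\longrightarrow} M \stackrel{x}{\longrightarrow} M,
\]
exactness at the second $M$ reads $\Ker(y\cdot) = \Im(x\cdot)$, i.e.\ $0:_M y = xM$, and exactness at the third $M$ reads $\Ker(x\cdot) = \Im(y\cdot)$, i.e.\ $0:_M x = yM$.

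Next I would consider the multiplication-by-$y$ endomorphism $M \stackrel{y}{\longrightarrow} M$. Its image is $yM$, and by the first exactness observation its kernel is $xM$. Hence the first isomorphism theorem gives
\[
M/xM \;\cong\; yM \;=\; 0:_M x,
\]
which is the desired isomorphism.

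There is essentially no obstacle here: the result is a formal consequence of decoding what ``exact'' means at two consecutive spots, and the symmetry between $x$ and $y$ in Definition \ref{k} makes the conclusion immediate once the kernels and images are identified. The only thing to be careful about is not confusing the two exactness conditions (the one controlling $\Ker(x\cdot)$ versus the one controlling $\Ker(y\cdot)$), since both are needed to match $M/xM$ on one side with $0:_M x$ on the other.
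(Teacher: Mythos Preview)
Your argument is correct: exactness at the two interior spots gives $0:_M y = xM$ and $0:_M x = yM$, and applying the first isomorphism theorem to the map $y\cdot$ immediately yields $M/xM \cong yM = 0:_M x$. The paper itself does not supply a proof of this fact, simply citing \cite[Proposition 2.4]{dg}; your short computation is exactly the intended verification and there is nothing to add.
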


The following fact was proved for $n=0$ in \cite[Lemma 2.10]{dg},  but it is clearly true for any $n\geq0$.
\begin{fact} \label{b}
Assume that $x, y$ form a pair of exact zero--divisors on $R$. Let $M$ be an $R$--module,  $n$ a non-negative integer.
Consider the following statements.
 \begin{itemize}
\item[(i)] $x,  y$ form a pair of exact zero--divisors on $M$.
\item[(ii)]  ${\rm Ext}^i_R(R/xR, M)=0$ for all $i>n$.
\item[(iii)] ${\rm Tor}^R_i(R/xR, M)=0$ for all $i>n$.
\end{itemize}
Then {\rm (i)}$\Rightarrow${\rm (ii)}$\Leftrightarrow${\rm (iii)}.
If one of the following conditions holds true,  then the statements {\rm  (i),  (ii)} and {\rm (iii)} are equivalent.

{\rm  (a)} $xM\neq 0$ and $xM\neq M$.

{\rm  (b)} $R$ is local and $M$ is finite.
\end{fact}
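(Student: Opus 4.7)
My plan is to exploit the periodic free resolution of $R/xR$ afforded by the pair $x, y$ of exact zero-divisors on $R$. Since $(0:_R x) = yR$ and $(0:_R y) = xR$, the sequence
\[
\cdots \xrightarrow{y} R \xrightarrow{x} R \xrightarrow{y} R \xrightarrow{x} R \longrightarrow R/xR \longrightarrow 0
\]
is exact, giving a $2$-periodic free resolution of $R/xR$ whose differentials from degree $1$ onward alternate as $x, y, x, y, \ldots$. The whole statement will reduce to bookkeeping of $\mathrm{Hom}_R(-,M)$ and $-\otimes_R M$ applied to this resolution.

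Applying these two functors and tracking the alternation, I observe that every positive $\mathrm{Ext}^i_R(R/xR,M)$ and every positive $\mathrm{Tor}_i^R(R/xR,M)$ is isomorphic to one of the two cokernels $(0:_M x)/yM$ or $(0:_M y)/xM$, with each appearing infinitely often in both towers. Consequently, for any $n \ge 0$, condition (ii) amounts to the simultaneous vanishing of these two cokernels, and so does (iii); this establishes (ii) $\Leftrightarrow$ (iii). For (i) $\Rightarrow$ (ii), the exactness hypothesis in (i) gives $0:_M x = yM$ and $0:_M y = xM$ outright, which makes both cokernels zero.

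For the reverse implications under (a) or (b), assume (ii) (equivalently (iii)). The vanishing of the two cokernels yields $0:_M x \subseteq yM$ and $0:_M y \subseteq xM$, while the opposite inclusions come from $xy = 0$ in $R$; the resulting equalities are precisely the exactness of $M \xrightarrow{x} M \xrightarrow{y} M \xrightarrow{x} M$. To promote this to (i) one still needs $xM \ne 0$ and $xM \ne M$: condition (a) gives these directly, whereas under (b) they follow from Nakayama's Lemma, since $x, y$ lie in the maximal ideal---$xM = M$ would force $M = 0$, and $xM = 0$ combined with $yM = 0:_M x = M$ would likewise force $M = 0$. The main care required is just the index-tracking for the alternating Ext/Tor tower; I do not anticipate any serious obstacle beyond that bookkeeping.
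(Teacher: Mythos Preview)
Your argument is correct and is the natural one: the $2$-periodic free resolution of $R/xR$ makes the higher $\mathrm{Ext}^i_R(R/xR,M)$ and $\mathrm{Tor}_i^R(R/xR,M)$ periodic, each cycling through the two quotients $(0:_M x)/yM$ and $(0:_M y)/xM$, from which all the implications drop out. The paper itself gives no proof of this fact; it simply cites \cite[Lemma~2.10]{dg} for the case $n=0$ and observes that the (periodicity-driven) extension to arbitrary $n$ is clear, so your approach is in line with what is being invoked.
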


\begin{fact} \cite[Proposition 2.13]{dg} \label{c}
Assume that  $x,  y$ form a pair of exact zero--divisors on both $R$ and an $R$--module $M$ and that $N$ is an $R/xR$--module. Then the following statements hold true for all $i\geq0$.
 \begin{itemize}
\item[(i)] ${\rm Ext}^i_R(N, M)\cong {\rm Ext}^i_{R/xR}(N, M/xM)$.
\item[(ii)] ${\rm Ext}^i_R(M, N)\cong {\rm Ext}^i_{R/xR}(M/xM, N)$.
\item[(iii)] ${\rm Tor}^R_i(M, N)\cong {\rm Tor}^{R/xR}_i(M/xM, N)$.
\end{itemize}
\end{fact}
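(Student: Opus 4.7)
The plan is to apply three standard change--of--rings (Grothendieck) spectral sequences whose $E_2$--pages collapse dramatically under the given hypotheses. By Fact \ref{b}, the assumption that $x,y$ form a pair of exact zero--divisors on $M$ forces ${\rm Ext}^q_R(R/xR,M)=0={\rm Tor}_q^R(R/xR,M)$ for every $q>0$; by Fact \ref{a}, ${\rm Hom}_R(R/xR,M)\cong 0:_M x\cong M/xM$, while $R/xR\otimes_R M\cong M/xM$ holds tautologically. So in degree $q=0$ both functors evaluate to $M/xM$, and in positive degrees they vanish.

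For part (i), I would use the natural identification ${\rm Hom}_R(N,-)={\rm Hom}_{R/xR}(N,-)\circ{\rm Hom}_R(R/xR,-)$, valid for any $R/xR$--module $N$ because every $R$--homomorphism out of $N$ lands in the $x$--torsion of the target, together with the observation that ${\rm Hom}_R(R/xR,-)$ sends $R$--injectives to $R/xR$--injectives. The resulting Grothendieck spectral sequence
\[
E_2^{p,q}={\rm Ext}^p_{R/xR}(N,\,{\rm Ext}^q_R(R/xR,M))\Longrightarrow {\rm Ext}^{p+q}_R(N,M)
\]
collapses onto its $q=0$ row by the vanishing recorded above, yielding (i).

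For parts (ii) and (iii) I would invoke the analogous spectral sequences
\[
E_2^{p,q}={\rm Ext}^p_{R/xR}({\rm Tor}_q^R(R/xR,M),\,N)\Longrightarrow {\rm Ext}^{p+q}_R(M,N)
\]
and
\[
E_2^{p,q}={\rm Tor}_p^{R/xR}({\rm Tor}_q^R(R/xR,M),\,N)\Longrightarrow {\rm Tor}_{p+q}^R(M,N),
\]
in which $N$ retains its $R/xR$--module structure. Vanishing of ${\rm Tor}_q^R(R/xR,M)$ for $q>0$ again forces collapse onto the $q=0$ row, and the identification $R/xR\otimes_R M\cong M/xM$ then delivers (ii) and (iii).

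The main obstacle is bookkeeping rather than conceptual: verifying the adjointness and the preservation of injectives/projectives needed for each Grothendieck spectral sequence to converge in the stated generality. If one prefers to avoid spectral sequences, a direct substitute is available: the pair $(x,y)$ gives a $2$--periodic free resolution of $R/xR$ over $R$; splicing it with an $R/xR$--projective resolution $P_\bullet\to N$ yields a Cartan--Eilenberg double complex whose totalization is an $R$--projective resolution of $N$, and the ${\rm Ext}$/${\rm Tor}$ vanishing against $M$ then collapses the double complex to a single complex computing the right--hand side in each of (i)--(iii).
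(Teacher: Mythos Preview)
The paper does not supply its own proof of this statement: it is recorded as a Fact with a bare citation to \cite[Proposition 2.13]{dg}. Your argument via the three change--of--rings Grothendieck spectral sequences is correct and self--contained. Fact~\ref{b} (with $n=0$) gives the vanishing of ${\rm Ext}^{>0}_R(R/xR,M)$ and ${\rm Tor}^R_{>0}(R/xR,M)$, each $E_2$ page collapses onto its $q=0$ row, and Fact~\ref{a} identifies the surviving term with $M/xM$ as needed. The hypotheses for the spectral sequences are exactly the ones you state: ${\rm Hom}_R(R/xR,-)$ carries $R$--injectives to $R/xR$--injectives by adjunction, and $R/xR\otimes_R-$ carries $R$--projectives to $R/xR$--projectives; the composites are identified via adjunction and associativity of tensor. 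Your alternative sketch using the $2$--periodic free resolution of $R/xR$ and a Cartan--Eilenberg double complex is also valid and amounts to building these spectral sequences by hand.
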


The class of $\g_C$--projective $R$--modules,  $\mathcal G _C$,  the Auslander class $\mathcal A _C$ and the Bass class $\mathcal B _C$
have been investigated modulo a regular element ( see \cite[Propositions 5.7, 5.8, 5.9 and 5.10]{c}).
In this paper,  we are concerned with these classes modulo an exact zero--divisor.

\begin{defn}\cite[Definition 2.7]{hj}\label{g}
The class $\mathcal G _C(R)$ (or simply $\mathcal G _C$) consists of $\g_C$--{\it projective} $R$--modules,
i.e. the class of all finite $R$--modules $M$ which satisfy the following conditions.
\begin{itemize}
\item[(i)] the natural homomorphism $\delta^C_M: M \longrightarrow {M^{{\dag}{ \dag}}}$ is an isomorphism,  where ${(-)}^{\dag}={\rm Hom}_R(-,  C)$.
\item[(ii)] ${\rm Ext}^{>0}_R(M,  C)=0={\rm Ext}^{>0}_R(M^{\dag},  C)$.
\end{itemize}

\end{defn}

\begin{defn} \label{h}
The $Auslander\ class$ $\mathcal A _C(R)$ (or $\mathcal A _C$) with respect to $C$
is the class of all $R$--modules $M$ satisfying the following conditions.

(1) The natural map $ \gamma^C_M: M \longrightarrow {\rm Hom}_R(C,  C\otimes_R M)$ is an isomorphism.

(2) ${\rm Tor}^R_{>0}(C,  M)=0={\rm Ext}^{>0}_R(C,  C\otimes_R M)$.

Dually,  the $Bass\ class$ $\mathcal B _C(R)$ (or $\mathcal B _C$) with respect to $C$ is the class of all $R$--modules $M$ satisfying the following conditions.

(1) The evaluation map $ \xi ^C_M: C\otimes_R {\rm Hom}_R(C,  M)\longrightarrow M$ is an isomorphism.

(2) $ {\rm Ext}^{>0}_R(C,  M)=0={\rm Tor}^R_{>0}(C,  {\rm Hom}_R(C,  M))$.
\end{defn}

We need the following classes which are studied in \cite{hw},  \cite{tw} and \cite{swty}.

\begin{defn} \cite[Definition 5.1]{hw}\label{i}
The classes of $C$--{\it injective},  $C$--{\it projective} and $C$--{\it flat} modules are defined,  respectively,  as
\begin{itemize}
\item[] $ {\mathcal I}_C=\{\ {\rm Hom}_R(C,  I)\ |\ I\ {\rm is\ an\ injective}\ R$--module$\}, $
\item[] $ {\mathcal P}_C=\{\ C\otimes_R P\ |\ P\ {\rm is\ a\ projective}\ R$--module$\}, $
\item[] $ {\mathcal F}_C=\{\ C\otimes_R F\ |\ F\ {\rm is\ a\ flat}\ R$--module$\}.$
\end{itemize}
They are the classes of injective,  projective and flat $R$--modules,  respectively,  when $C=R$.
\end{defn}

\begin{rem}\cite[1.4 and Proposition 5.3]{hw}\label{e}
For any $R$--module $M$ there exists an {\it augmented proper} $ {\mathcal P}_C$--{\it projective resolution},
 that is,  a complex
$$X^+ = \cdots\stackrel{{\partial}_2^{X}}{\longrightarrow }C\otimes_R P_1\stackrel{{\partial}_1^{X}}{\longrightarrow }C\otimes_R P_0 \stackrel{{\partial}_0^{X}}{\longrightarrow }M\longrightarrow 0$$
such that ${\rm Hom}_R(C\otimes_R Q,  X^+)$ is exact for all projective $R$--module $Q$. The truncated complex
$$X = \cdots\stackrel{{\partial}_2^{X}}{\longrightarrow }C\otimes_R P_1\stackrel{{\partial}_1^{X}}{\longrightarrow }C\otimes_R P_0\longrightarrow 0$$
is called a {\it proper} $ {\mathcal P}_C$--{\it projective resolution} of $M$. Note that $X^+$ need not be exact unless $C=R$.

An {\it augmented proper} $ {\mathcal F}_C$--{\it projective resolution} for $M$ is defined similarly.

Dually,  for any $R$--module $N$ there exists an {\it augmented proper} $ {\mathcal I}_C$--{\it injective resolution},  that is,  a complex
$$Y^+ = 0\longrightarrow N \longrightarrow {\rm Hom}_R(C,  I^0) \stackrel{{\partial}^0_Y}{\longrightarrow } {\rm Hom}_R(C,  I^1) \stackrel{{\partial}^1_Y}{\longrightarrow }\cdots\ $$
such that ${\rm Hom}_R(Y^+,  {\rm Hom}_R(C,  I))$ is exact for all injective $R$--module $I$.
\end{rem}

\begin{defn} \cite[1.6 and Proposition 5.3]{hw}\label{f}
The $ {\mathcal P}_C$--{\it projective dimension} of an $R$--module $M$ is
$${\mathcal P}_C-{\rm pd}(M)=\inf \big\{\sup X \mid X\ {\rm is\ a\ proper}\ {\mathcal P}_C-{\rm projective\ resolution\ of}\ M \big\},$$ where $\sup X= \sup\{n\mid  X_n\neq 0\}$.
The modules of zero ${\mathcal P}_C$--projective dimensions are the non-zero modules in $ {\mathcal P}_C$; and we set ${\mathcal P}_C$--{\rm pd}$(0)=-\infty$.
The $ {\mathcal F}_C$--{\it projective dimension},  denoted $ {\mathcal F}_C$--pd(--),  is defined similarly and
the $ {\mathcal I}_C$--{\it injective dimension},  denoted $ {\mathcal I}_C$--id(--),  is defined dually.
\end{defn}


\section{Semidualizing modules via exact zero--divisors} \label{sec3}

Note that if $x\in R$ is non--zero,  then $xC\neq 0$. By Nakayama's lemma,  $xC=C$ if and only if $(x)=R$.
Now if  $x,  y$  form a pair of  exact zero--divisors on $R$ and ${\rm pd}(C)<\infty$,  then Auslander-Buchsbaum formula implies that $C$ is projective and so $x,  y$  form a pair of  exact zero--divisors on $C$ by Definition \ref{k}. Moreover,  if $R$ is Cohen-Macaulay local ring with dualizing module $\omega$,  that is a semidualizing module with finite injective dimension,  then $x,  y$  form also a pair of  exact zero--divisors on $\omega$,  by Fact \ref{b} (see also \cite[Corollary 2.12]{dg}).
In general,  the authors do not know  whether a pair of exact zero--divisors on $R$ is also a pair of exact zero--divisors on $C$.
In the following proposition one can see that it holds true under certain conditions.

\begin{prop}\label{1}
Let $(R,  {\frak m},  k)$ be a Cohen-Macaulay local ring that is not Gorenstein,  with dualizing module $\omega$. Let $f:R\longrightarrow S$ be a flat local ring homomorphism such that $S/{\frak m}S$ is not Gorenstein. Assume that $x,  y\in S$ form a pair of exact zero--divisors on $S$ such that
${\rm fd}_R(S/xS)<\infty$. Then $S\otimes_R \omega$ is a semidualizing $S$--module which is not a dualizing $S$--module and
${\rm pd}_S(S\otimes_R \omega)=\infty$. Moreover,  $x,  y$ form a pair of exact zero--divisors on $S\otimes_R \omega$.
\end{prop}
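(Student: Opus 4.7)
The plan is to verify the four conclusions in order. For the semidualizing claim I would use ascent under flat base change: since $R \to S$ is flat and $\omega$ is finite over $R$, flatness lets us pass ${\rm Hom}_R$ and ${\rm Ext}^i_R$ through the tensor product, giving
\[
S \otimes_R {\rm Hom}_R(\omega, \omega) \cong {\rm Hom}_S(S \otimes_R \omega,\, S \otimes_R \omega)
\]
and analogously for higher Ext; the homothety isomorphism and Ext-vanishing for $\omega$ over $R$ then transfer to the corresponding statements for $S \otimes_R \omega$ over $S$. To see that $S \otimes_R \omega$ is not dualizing, I would invoke the Bass-number formula for a flat local homomorphism, which gives
\[
{\rm id}_S(S \otimes_R \omega) = {\rm id}_R(\omega) + {\rm id}_{S/\fm S}(S/\fm S).
\]
The first summand is finite since $\omega$ is dualizing for $R$, but the second is infinite since $S/\fm S$ is not Gorenstein; hence ${\rm id}_S(S \otimes_R \omega) = \infty$, and so $S \otimes_R \omega$ cannot be dualizing.

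For ${\rm pd}_S(S \otimes_R \omega) = \infty$, I would invoke the classical fact (see e.g.\ \cite{w}) that any semidualizing module of finite projective dimension over a local ring is isomorphic to that ring. If this applied to $S \otimes_R \omega$, then $S \otimes_R \omega \cong S$; comparing minimal numbers of generators through the flat local map $R \to S$ yields $\mu_R(\omega) = \mu_S(S \otimes_R \omega) = 1$, so $\omega$ is cyclic. Since a semidualizing module is faithful (its homothety embeds $R$ into $\Hom_R(\omega,\omega)$), this forces $\omega \cong R$ and makes $R$ Gorenstein, contrary to hypothesis.

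Finally, to show that $x, y$ is a pair of exact zero--divisors on $S \otimes_R \omega$, I would apply Fact \ref{b} over the local ring $S$ with the finite $S$-module $M = S \otimes_R \omega$, using case (b). By flatness of $R \to S$, a free $R$-resolution of $\omega$ tensors up to a free $S$-resolution of $S \otimes_R \omega$, whence
\[
{\rm Tor}^S_i(S/xS,\, S \otimes_R \omega) \cong {\rm Tor}^R_i(S/xS,\, \omega),
\]
which vanishes for all $i > {\rm fd}_R(S/xS)$ by the hypothesis. Fact \ref{b} then delivers the conclusion. I expect the main obstacle to be the non-dualizing step, where the Bass-number base-change formula must be invoked carefully to pin down ${\rm id}_S(S \otimes_R \omega) = \infty$; once that is secured, the remaining verifications are routine.
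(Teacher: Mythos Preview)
Your proof is correct and follows essentially the same route as the paper: flat base change for the semidualizing property (the paper cites \cite[Proposition~2.2.1]{w}), the non-Gorenstein closed fiber forcing $S\otimes_R\omega$ to be non-dualizing (the paper cites \cite[Proposition~2.2.15]{w}, which encodes exactly the injective-dimension base-change argument you sketch), non-cyclicity of $\omega$ forcing ${\rm pd}_S(S\otimes_R\omega)=\infty$ via \cite[Corollary~2.2.8]{w}, and the Tor base-change isomorphism together with Fact~\ref{b}(b) for the exact zero--divisor claim. Your anticipated ``main obstacle'' is in fact the step the paper handles most briskly, by a single citation.
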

\begin{proof}
Set $B=S\otimes_R \omega$.
By \cite[Theorem 5.6]{c},  $B$ is a semidualizing $S$--module while $B$ is not dualizing module.
Since $\omega \ncong R$ and ${\rm Ann}_R(\omega)=0$,  one observes that $\omega$ is not cyclic. Then $B$ is not cyclic and so $B\ncong S$. Now,  as $S$ is local, Auslander-Buchsbaum formula implies that ${\rm pd}_S(B)=\infty$.

As $S$ is flat $R$--module,  we obtain the isomorphisms
$${\rm Tor}^S_i(S/xS,  B)\cong{\rm Tor}^R_i(S/xS\otimes_S S,  \omega)\cong {\rm Tor}^R_i(S/xS,  \omega)$$
for all $i$,  where the first isomorphism is from \cite[Corollary 10.61]{r}.
Thus ${\rm Tor}^S_{i>n}(S/xS,  B)=0$ for some non-negative integer $n$. Now,  by Fact \ref{b},  $x,  y$ form a pair of exact zero--divisors on $B$.
\end{proof}

We take the following example from \cite[Example 2.3.1]{w} to justify Proposition \ref{1}.
\begin{exmp}
Let $R=k[X,  Y]/(X, Y)^2$,  whenever $k$ is a field. Then $R$ is a local artinian ring that is not Gorenstein.
By \cite[Theorem 6.1]{c},  $\omega={\rm Hom}_k(R,  k)$ is dualizing $R$--module,  as $R$ is free $k$--module of rank 3.
Set $S=R[U,  V,  W,  Z]/(U^2,  VW,  VZ)$. Then $S$ is free $R$--module and $S/{\fm S}\cong k[U,  V,  W,  Z]/(U^2,  VW,  VZ)$ is not Cohen-Macaulay,  where
${\frak m}$ is the maximal ideal of $R$. If $u$ is the image of $U$ in $S$,  then $u,  u$ form a pair of exact zero--divisors on $S$.
We have an $R$--isomorphism $S/uS\cong R[V,  W,  Z]/(VW,  VZ)$ and so $S/uS$ is free $R$--module.
Thus $u,  u$ form also a pair of exact zero--divisors on the semidualizing $S$--module $S\otimes_R \omega$,  by Proposition \ref{1}.
Note that $S\otimes_R \omega$ is not a dualizing $S$--module with ${\rm pd}_S(S\otimes_R \omega)=\infty$.
\end{exmp}

It is easy to see that if $x,  y$ form a pair of  exact zero--divisors on $R$,  then $R/xR\in {\mathcal G}_R$.
The following proposition shows that $R/xR \in {\mathcal G}_{ C}$,  whenever $x,  y$ form also a pair of  exact zero--divisors on $C$.
\begin{prop}\label{A}
If $x,  y$ form a pair of exact zero--divisors on both $R$ and $C$,  then
${R / xR}  \in {\mathcal G}_{ C}$.
\end{prop}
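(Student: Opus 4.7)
The plan is to verify the defining conditions of $\g_C$--projectivity for $M = R/xR$: finiteness is immediate, so I must check the two Ext vanishings $\mathrm{Ext}_R^{>0}(M, C) = 0 = \mathrm{Ext}_R^{>0}(M^{\dag}, C)$ and the biduality isomorphism $\delta^C_M : M \to M^{\dag\dag}$. The first vanishing is free from Fact~\ref{b} applied to $C$ with $n = 0$, since $x,y$ is a pair of exact zero--divisors on $C$. For the dual itself, the standard identification $\mathrm{Hom}_R(R/xR, C) \cong (0 :_C x)$ together with Fact~\ref{a} applied to $C$ yields $(R/xR)^{\dag} \cong (0 :_C x) = yC \cong C/xC$. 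The task is thus reduced to showing $\mathrm{Ext}_R^{>0}(C/xC, C) = 0$ and computing $\mathrm{Hom}_R(C/xC, C)$.

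The main obstacle is the Ext vanishing for $C/xC$, since Fact~\ref{b} does not apply directly to it. I would exploit the periodic exact complex $\cdots \to C \xrightarrow{x} C \xrightarrow{y} C \xrightarrow{x} C \to \cdots$ produced by the exact zero--divisor on $C$, which breaks into the short exact sequences $0 \to yC \to C \to C/yC \to 0$ and $0 \to xC \to C \to C/xC \to 0$. Applying $\mathrm{Hom}_R(-,C)$ to the first and using the identifications $\mathrm{Hom}_R(C, C) \cong R$, $\mathrm{Hom}_R(C/yC, C) \cong xR$, and $\mathrm{Hom}_R(yC, C) \cong \mathrm{Hom}_R(C/xC, C) \cong yR$, the induced map $R \to yR$ becomes multiplication by $y$ and is surjective; hence $\mathrm{Ext}_R^1(C/yC, C) = 0$, and swapping the roles of $x$ and $y$ gives $\mathrm{Ext}_R^1(C/xC, C) = 0$. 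In higher degrees the same long exact sequences together with $\mathrm{Ext}_R^{>0}(C, C) = 0$ produce dimension shifts $\mathrm{Ext}_R^{i+1}(C/yC, C) \cong \mathrm{Ext}_R^i(yC, C) \cong \mathrm{Ext}_R^i(C/xC, C)$ for $i \geq 1$, and symmetrically; this $2$--periodicity combined with the vanishing in degree $1$ for both $C/xC$ and $C/yC$ gives $\mathrm{Ext}_R^{>0}(C/xC, C) = 0$.

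These computations also deliver $(R/xR)^{\dag\dag} = \mathrm{Hom}_R(C/xC, C) \cong yR \cong R/xR$, so both sides of $\delta^C_{R/xR}$ are abstractly copies of $R/xR$. To conclude I would trace $\bar 1$ through the identifications: $\delta^C_{R/xR}(\bar 1)$ becomes, under the iso $(R/xR)^{\dag} \cong yC$, the inclusion $yC \hookrightarrow C$, which under $\mathrm{Hom}_R(yC, C) \cong yR \cong R/xR$ corresponds to $y$ and then to $\bar 1$. Therefore $\delta^C_{R/xR}$ is the identity map $R/xR \to R/xR$ under these identifications, in particular an isomorphism.
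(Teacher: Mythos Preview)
Your proof is correct and complete. The approach differs from the paper's in both nontrivial steps. For the vanishing ${\rm Ext}^{>0}_R(C/xC,C)=0$, the paper invokes a Grothendieck spectral sequence ${\rm Ext}^p_R({\rm Tor}^R_q(C,R/xR),C)\Rightarrow{\rm Ext}^n_R(R/xR,{\rm Hom}_R(C,C))$, which collapses because ${\rm Tor}^R_{>0}(C,R/xR)=0$ and identifies the target with ${\rm Ext}^n_R(R/xR,R)=0$; you instead exploit the periodic complex $\cdots\to C\to C\to C\to\cdots$ and dimension-shift, using that ${\rm End}_R(C)=R$ and $C$ is faithful to compute the degree-zero Hom groups and check surjectivity by hand. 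Your argument is more elementary and self-contained (no spectral sequence machinery), while the paper's method is the one that scales to the later results (the same collapsing trick reappears in Proposition~\ref{J}). For biduality, the paper writes a chain of natural isomorphisms $R/xR\cong{\rm Hom}_R(R/xR,R)\cong{\rm Hom}_R(R/xR,{\rm Hom}_R(C,C))\cong{\rm Hom}_R(C/xC,C)$ via adjunction, implicitly leaving the check that this composite is $\delta^C_{R/xR}$; you are in fact slightly more careful, tracing the generator $\bar 1$ through the identifications to confirm that $\delta^C_{R/xR}$ itself is the isomorphism.
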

\begin{proof}
Since $x$ is an exact zero--divisor on $C$,  we conclude that
$ {\rm Hom}_{R} ({R / xR},  C)\cong {C / xC}$,  by Fact \ref{a}. So we have
$$\begin{array}{llll}
{R / xR}  \cong   {\rm Hom} _{R} ({R / xR},  R) \hspace{-0.25cm}  &\cong {\rm Hom} _{R} ({R / xR},  {\rm Hom}_R(C,  C)) \\
                                                 &\cong {\rm Hom}_R ( {R / xR}\otimes_R C,  C) \\
                                                 &\cong {\rm Hom}_R ({\rm Hom} _{R} ({R / xR},  C),  C).  \\
\end{array}$$
As $x,  y$ form a pair of exact zero--divisors on $C$,  one has $ {\rm Ext }^{>0}_R(R/xR,  C)=0$,  by Fact \ref{b}.
As $C\otimes_R P \in {\mathcal G}_{ C}$,  for any projective $R$-module $P$,  ${\rm Ext} ^{>0}_R(C\otimes_R P,  C)=0$.
By \cite[Theorem 10.62]{r} there is a spectral sequence
$${\rm E}_2^{p, q}= {\rm Ext }^{p}_R({\rm Tor}^R_q(C,  R/xR),  C) \underset{p}{\Rightarrow} {\rm Ext }^{n}_R(R/xR,  {\rm Hom}_R(C,  C)).$$
Since ${\rm Tor}^R_q(C,  R/xR)=0$ for all $q>0$,  by Fact \ref{b},   the spectral sequence  collapses on the $p$-axis
and so $ {\rm Ext }^{n}_R(C\otimes_R R/xR,  C)\cong  {\rm Ext }^{n}_R(R/xR,  {\rm Hom}_R(C,  C))$ for all $n\geq0$. Then, for all $n>0$,
$$\begin{array}{lll}
{\rm Ext }^{n}_R({\rm Hom} _{R} ({R / xR},  C),  C) & \cong {\rm Ext }^{n}_R(C\otimes_R R/xR,  C) \hspace{-0.25cm}\\& \cong {\rm Ext }^{n}_R(R/xR,  {\rm Hom}_R(C,  C))\\
                                                                                               & \cong {\rm Ext }^{n}_R(R/xR,  R)=0\\
\end{array}$$
and the result follows.
\end{proof}

In the following result we use Proposition \ref{A},  to achieve semidualizing $R/xR$--modules.
\begin{prop}\label{B}
Let $B$ be a finite $R$--module. Assume that $x,  y$ form a pair of exact zero--divisors on both $R$ and $B$.
Then the following statements are equivalent.
\begin{itemize}
\item[(i)] $B$ is a semidualizing $R$--module.
\item[(ii)] $B/xB$ and $B/yB$ are semidualizing $R/xR$-- and $R/yR$--modules,  respectively.
\end{itemize}
\end{prop}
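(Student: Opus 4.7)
The plan is to prove the two implications separately, in each case exploiting the short exact sequence
\[
0\longrightarrow B/xB\stackrel{\tilde y}{\longrightarrow}B\longrightarrow B/yB\longrightarrow 0
\]
obtained by identifying $0:_B x=yB\cong B/xB$ (Fact \ref{a}), where $\tilde y$ is the map $b+xB\mapsto yb$, together with its analogue on the ring, $0\to R/xR\to R\to R/yR\to 0$.

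For (i)$\Rightarrow$(ii), by symmetry it suffices to show $B/xB$ is semidualizing over $R/xR$; finiteness is automatic. Fact \ref{c}(i) gives ${\rm Ext}^i_R(B/xB,B)\cong{\rm Ext}^i_{R/xR}(B/xB,B/xB)$, so it is enough to evaluate the left-hand side. The spectral sequence used in the proof of Proposition \ref{A},
\[
{\rm Ext}^p_R({\rm Tor}^R_q(B,R/xR),B)\Rightarrow{\rm Ext}^{p+q}_R(R/xR,{\rm Hom}_R(B,B)),
\]
collapses on the $p$-axis because ${\rm Tor}^R_q(B,R/xR)=0$ for $q>0$ (Fact \ref{b}); combined with the isomorphism ${\rm Hom}_R(B,B)\cong R$ (as $B$ is semidualizing) this yields ${\rm Ext}^n_R(B/xB,B)\cong{\rm Ext}^n_R(R/xR,R)$. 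By Fact \ref{b} this vanishes for $n>0$, and for $n=0$ it is $0:_R x=yR\cong R/xR$ by Fact \ref{a}. A direct naturality check confirms the degree-zero isomorphism is the $R/xR$-homothety of $B/xB$.

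For (ii)$\Rightarrow$(i), Fact \ref{c}(ii) with $M=B$ and $N=B/xB$ translates the semidualizing hypothesis on $B/xB$ into ${\rm Ext}^{>0}_R(B,B/xB)=0$ together with ${\rm Hom}_R(B,B/xB)\cong R/xR$, and symmetrically for $B/yB$. Applying ${\rm Hom}_R(B,-)$ to the short exact sequence above and reading off the resulting long exact sequence immediately forces ${\rm Ext}^{>0}_R(B,B)=0$ and produces
\[
0\longrightarrow R/xR\longrightarrow{\rm Hom}_R(B,B)\longrightarrow R/yR\longrightarrow 0.
\]
Placing this underneath the $R$-sequence yields a ladder whose middle vertical arrow is the homothety $\chi^R_B$ and whose outer verticals are the semidualizing homothety isomorphisms of $B/xB$ and $B/yB$; the five lemma then forces $R\cong{\rm Hom}_R(B,B)$.

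The one technical point is the commutativity of the five-lemma diagram: one must unwind the identification $0:_B x\cong B/xB$ of Fact \ref{a} (essentially multiplication by $y$) through the Hom functor in order to see that the left-hand vertical isomorphism $R/xR\cong{\rm Hom}_R(B,B/xB)$ intertwines the horizontal map $\tilde y$ with the inclusion into ${\rm Hom}_R(B,B)$. Once this identification is verified, the Ext-vanishings and the five lemma are routine.
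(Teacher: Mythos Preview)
Your proof is correct and follows essentially the same approach as the paper. In (i)$\Rightarrow$(ii) you unpack the spectral sequence argument directly (the paper instead packages this as an application of Proposition \ref{A}), and in (ii)$\Rightarrow$(i) you apply the covariant functor $\Hom_R(B,-)$ to the short exact sequence whereas the paper applies the contravariant $\Hom_R(-,B)$; but these are superficial variants of the same argument, and the key ingredients---Fact \ref{b}, Fact \ref{c}, the short exact sequence from the exact zero--divisor pair, and the five lemma---are identical.
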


\begin{proof}
(i)$\Rightarrow$(ii). It is enough to show that $B/xB$ is a semidualizing $R/xR$--module.
As $x$ is an exact zero--divisor on $B$ we have
$$\begin{array}{llll}
{R / xR} \hspace{-0.25cm}& \cong {\Hom}_R ({\rm Hom} _{R} (R/xR,  B),  B)\\
                         & \cong {\Hom}_{R} (B/xB,  B)\\
                         & \cong {\Hom}_{R/xR} (B/xB,  B/xB),  \\
\end{array}$$
where the first,  second and third isomorphisms are obtained by Proposition \ref{A},  Fact \ref{a} and Fact \ref{c},  respectively. For all $i>0$
$$\begin{array}{lll}{\rm Ext }^{i}_{R/xR} (B/xB,  B/xB) & \cong {\rm Ext }^{i}_{R} (B/xB,  B)\\
                                         &  \cong {\rm Ext }^{i}_{R} ({\rm Hom} _{R} (R/xR,  B),  B)\\ & =0,
                                         \end{array}$$
 where the first isomorphism is obtained by Fact \ref{c} and the equality holds by Proposition \ref{A}.

(ii)$\Rightarrow$(i).
Let $z\in \{x,  y\}$. By Fact \ref{c},  we have $\Ext^{i}_{R} (B/zB, B) \cong {\Ext }^{i}_{R/zR} (B/zB, B/zB)$ for all $ i \geq 0$. Thus
$\Ext^{i}_{R} (B/zB,  B)=0$ for all $i>0$ and  $\Hom_{R} (B/zB,  B)\cong {R/zR}$,  by the fact that $B/zB$ is a semidualizing $R/zR$-module.
Since $x,  y$ form a pair of exact zero--divisors on $B$,  there is an exact sequence
\begin{equation}\label{e1}{0 \longrightarrow  B/yB \longrightarrow B \longrightarrow B/xB \longrightarrow 0.}
\end{equation} Now applying $\Hom_R(-,  B)$ to (\ref{e1}) gives $\Ext^{>0}_R(B,  B)=0$ and the following commutative diagram
$$\begin{array}{lllllllll}
0 & \longrightarrow & \Hom_{R}(B/xB,  B) & \longrightarrow & \Hom_{R} (B,  B) & \longrightarrow & \Hom_{R} (B/yB,  B) &\longrightarrow & 0 \\
  &                 &\begin{array}{ll} \ \ \ {\Bigg\uparrow} \cong \end{array} &               &\begin{array}{ll} \ \ \ {\Bigg\uparrow} \chi^{R}_{B} \end{array}  &         &\begin{array}{ll}\ \ \ {\Bigg\uparrow}\cong\end{array} &                &  \\
0 & \longrightarrow & \ \ \    R/xR         & \longrightarrow &   \ \ \ \   R       & \longrightarrow & \ \ \  R/yR            &\longrightarrow & 0. \\
\end{array}$$
Thus $ \chi^{R}_{B}  $ is an isomorphism and so $B$ is a semidualizing $R$--module.
\end{proof}

\begin{cor}
Let $(R,  {\frak m},  k)$ be a Cohen-Macaulay local ring,  $D$ a finite $R$--module. Assume that
$x,  y$ form a pair of exact zero--divisors on both $R$ and $D$.
If $D/xD$ is a  dualizing $R/xR$--module and  $D/yD$ is a semidualizing $R/yR$--module,  then
$D$ is a  dualizing $R$--module.
\end{cor}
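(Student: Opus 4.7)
The plan is to invoke Proposition \ref{B} for the semidualizing property and then to lift finite injective dimension from $R/xR$ to $R$ via Fact \ref{c}(i).

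First, since every dualizing module is in particular semidualizing, the hypotheses give that $D/xD$ is a semidualizing $R/xR$-module and $D/yD$ is a semidualizing $R/yR$-module. Proposition \ref{B} (specifically the implication (ii)$\Rightarrow$(i)) then immediately yields that $D$ is a semidualizing $R$-module.

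Next I would verify that $\mathrm{id}_R(D) < \infty$. Because $x$ is an exact zero-divisor, it is a non-unit, hence $x \in \mathfrak{m}$, so $k = R/\mathfrak{m}$ is naturally an $R/xR$-module. Applying Fact \ref{c}(i) with $M = D$ and $N = k$ yields
\[
\mathrm{Ext}^{i}_R(k, D) \cong \mathrm{Ext}^{i}_{R/xR}(k, D/xD) \qquad \text{for every } i \geq 0.
\]
Since $D/xD$ is dualizing over the local ring $R/xR$, we have $\mathrm{id}_{R/xR}(D/xD) < \infty$, so the right-hand side vanishes for all sufficiently large $i$. By the classical theorem of Bass---that for a finitely generated module $M$ over a Noetherian local ring one has $\mathrm{id}_R(M) = \sup\{\,i \mid \mathrm{Ext}^{i}_R(k, M) \neq 0\,\}$---we deduce $\mathrm{id}_R(D) < \infty$. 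Combined with the semidualizing property and the assumption that $R$ is Cohen-Macaulay local, this says that $D$ is a dualizing $R$-module.

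The main conceptual step is the clean degree-preserving isomorphism provided by Fact \ref{c}(i), which is the feature distinguishing exact zero-divisors from regular elements (where one would need to track a shift in degree or run a change-of-rings spectral sequence). Beyond this, the corollary is essentially the assembly of Proposition \ref{B} with the classical residue-field detection of injective dimension, so I expect no substantial additional obstacle.
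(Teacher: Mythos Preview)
Your proof is correct and follows essentially the same route as the paper: first Proposition~\ref{B} gives that $D$ is semidualizing, and then finite injective dimension is lifted from $R/xR$ to $R$. The only difference is cosmetic---the paper cites \cite[Corollary 2.14]{dg} for the equality ${\rm id}_R(D)={\rm id}_{R/xR}(D/xD)$, whereas you unpack that citation into its ingredients, namely Fact~\ref{c}(i) applied with $N=k$ together with Bass's residue-field criterion for injective dimension.
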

\begin{proof}
By Proposition \ref{B},  $D$ is a semidualizing $R$--module. By \cite[corollary 2.14]{dg},  we have
 ${\rm id}_{R}(D)={\rm id}_{R/xR}(D/xD)<\infty$ and so $D$ is a dualizing $R$--module.
\end{proof}

Note that the converse of the corollary was proved in \cite[Corollary 2.12]{dg}.

\section{The classes $\mathcal{A}_C$,  $\mathcal{B}_C$ and $\mathcal{G}_C$} \label{sec4}

This section contains our main results. Our aim is to find when $M/xM$ belongs to ${\mathcal G}_{ C}$,  $\mathcal{B}_C$,  or $\mathcal{A}_C$,
where $M$ is an $R$--module and $x$ is an exact zero--divisor on both $R$ and $M$.
\begin{prop}\label{C}
If $x, y$ form a pair of exact zero--divisors on both $R$ and $C$,  then $R/xR \in \mathcal{A}_C$.
\end{prop}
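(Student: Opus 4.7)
The plan is to verify the two defining conditions of $\mathcal{A}_C$ for $M = R/xR$, reducing everything to statements over the quotient ring $R/xR$ and its semidualizing module $C/xC$; note that $C/xC$ is indeed a semidualizing $R/xR$-module by Proposition \ref{B}, since $x,y$ form a pair of exact zero-divisors on both $R$ and $C$.

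First, the Tor vanishing ${\rm Tor}^R_{>0}(C, R/xR) = 0$ is immediate from Fact \ref{b} applied to the pair $x,y$ on $C$, and right-exactness of the tensor product (using $R \stackrel{x}{\to} R \to R/xR \to 0$) gives $C \otimes_R R/xR \cong C/xC$. Next, for the Ext vanishing ${\rm Ext}^{>0}_R(C, C/xC) = 0$ together with the homothety isomorphism ${\rm Hom}_R(C, C/xC) \cong R/xR$, I would view $C/xC$ as an $R/xR$-module and apply Fact \ref{c}(ii) with $M = C$ and $N = C/xC$ to get
$${\rm Ext}^i_R(C, C/xC) \cong {\rm Ext}^i_{R/xR}(C/xC, C/xC)$$
for every $i \geq 0$. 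The right-hand side then vanishes in positive degrees and reduces to $R/xR$ in degree zero, by the semidualizing property of $C/xC$ over $R/xR$.

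The remaining --- and most delicate --- step is to confirm that the natural map $\gamma^C_{R/xR}\colon R/xR \to {\rm Hom}_R(C, C/xC)$ corresponds to the homothety map $R/xR \to {\rm Hom}_{R/xR}(C/xC, C/xC)$ under the identifications above. This amounts to a naturality check: $\gamma^C_{R/xR}$ sends $\bar r$ to the map $c \mapsto \overline{rc}$, which under the Fact \ref{c}(ii) isomorphism matches multiplication by $\bar r$ on $C/xC$, an isomorphism precisely because $C/xC$ is semidualizing over $R/xR$. Verifying this compatibility cleanly is the main obstacle; the rest of the argument is essentially formal.
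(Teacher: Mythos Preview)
Your proposal is correct and follows essentially the same route as the paper: both use Fact~\ref{b} for the Tor vanishing, Fact~\ref{c}(ii) to reduce the Ext computation over $R$ to the Ext over $R/xR$, and Proposition~\ref{B} to invoke the semidualizing property of $C/xC$. The only difference is one of presentation: the paper simply writes the chain of isomorphisms $R/xR \cong {\rm Hom}_{R/xR}(C/xC,C/xC) \cong {\rm Hom}_R(C, C\otimes_R R/xR)$ without commenting on naturality, whereas you explicitly flag the verification that this composite agrees with $\gamma^C_{R/xR}$ --- a point of care that is implicit in the paper's argument.
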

\begin{proof}
By Proposition \ref{B} and Fact \ref{c},  there are isomorphisms
$$R/xR \cong {\rm Hom}_{R/xR} ({C / xC},  {C / xC}) \cong  {\rm Hom}_{R} (C,  C\otimes_R R/xR).$$
As $x,  y$ form a pair of exact zero--divisors on both $R$ and $C$,  we get $ {\rm Tor}^R_i(C,  R/xR)=0$ for all $i>0$,  by Fact \ref{b}.
Also,  for all $i>0$,  ${\rm Ext}^{i}_{R} (C,  C\otimes_R R/xR) \cong {\rm Ext}^{i}_{R/xR} ({C / xC},  {C / xC})=0$,
where the isomorphism and equality are obtained by Fact \ref{c} and Proposition \ref{B},  respectively.
\end{proof}

Now,  one may pay attention to the results \cite[Proposition 5.3 and Theorem 6.5]{c} via exact zero--divisors.
\begin{cor}\label{K}
 Assume that $x,  y$ form a pair of exact zero--divisors on both $R$ and $C$. If $T$ is an $R/xR$--module,  then the following statements hold true.
\begin{itemize}
\item[(i)] If $T$ is finite, then $T \in \mathcal G_C(R)$ if and only if $T \in \mathcal G_{C/xC}(R/xR)$.
\item[(ii)]    $T \in \mathcal A_C(R)$ if and only if $T \in \mathcal A_{C/xC}(R/xR)$.
\item[(iii)]    $T \in \mathcal B_C(R)$ if and only if $T \in \mathcal B_{C/xC}(R/xR)$.
\end{itemize}
\end{cor}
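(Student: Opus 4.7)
The plan is to prove all three equivalences by the same mechanism: translate every membership condition from the $R$--level to the $R/xR$--level using Fact \ref{c} (applied with $C$ playing the role of the module carrying the pair of exact zero--divisors, and $M$, or modules built from it, playing the role of the $R/xR$--module). Proposition \ref{B} ensures that $C/xC$ is a semidualizing $R/xR$--module, so the target classes $\mathcal G_{C/xC}(R/xR)$, $\mathcal A_{C/xC}(R/xR)$, $\mathcal B_{C/xC}(R/xR)$ make sense. Throughout, I shall use that any Hom or tensor of $R$--modules in which at least one factor is an $R/xR$--module is again an $R/xR$--module, so Fact \ref{c} can be iterated.

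For (i), write $(-)^{\dag} = \Hom_R(-,C)$ and $(-)^{\ddag} = \Hom_{R/xR}(-,C/xC)$. Fact \ref{c}(i) with $N=M$ gives $\Ext^i_R(M,C)\cong \Ext^i_{R/xR}(M,C/xC)$ for all $i\geq 0$; in particular $M^{\dag}\cong M^{\ddag}$ as $R/xR$--modules, and the vanishing $\Ext^{>0}_R(M,C)=0$ is equivalent to $\Ext^{>0}_{R/xR}(M,C/xC)=0$. Applying Fact \ref{c}(i) once more with $N=M^{\dag}\cong M^{\ddag}$ yields $M^{\dag\dag}\cong M^{\ddag\ddag}$ and the equivalence of the two remaining Ext--vanishing conditions. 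Finally, I would check that under these identifications the natural biduality map $\delta^C_M$ corresponds to $\delta^{C/xC}_M$; this is a naturality verification, since both maps are defined by evaluation and the isomorphism of Fact \ref{c}(i) is itself induced by the canonical surjection $R\twoheadrightarrow R/xR$ and the identification $\Hom_R(R/xR,C)\cong C/xC$ from Fact \ref{a}.

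For (ii) and (iii), the argument is entirely parallel. For (ii), Fact \ref{c}(iii) gives $\Tor^R_i(C,M)\cong \Tor^{R/xR}_i(C/xC,M)$ for all $i$, so $C\otimes_R M\cong (C/xC)\otimes_{R/xR} M$ (as $R/xR$--modules) and the Tor--vanishing matches. Then Fact \ref{c}(ii) with $N=C\otimes_R M$ and ``$M$''$=C$ produces
\[
\Ext^i_R(C,C\otimes_R M)\cong \Ext^i_{R/xR}(C/xC,\,C\otimes_R M)\cong \Ext^i_{R/xR}\bigl(C/xC,\,(C/xC)\otimes_{R/xR} M\bigr),
\]
and one checks that $\gamma^C_M$ corresponds to $\gamma^{C/xC}_M$ under these identifications. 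For (iii), dually, Fact \ref{c}(ii) with $N=M$ and ``$M$''$=C$ gives $\Ext^i_R(C,M)\cong \Ext^i_{R/xR}(C/xC,M)$, whence $\Hom_R(C,M)\cong \Hom_{R/xR}(C/xC,M)$ and the first Ext--vanishing condition transfers. Then Fact \ref{c}(iii) applied to $N=\Hom_R(C,M)$ handles the Tor--vanishing and yields $C\otimes_R\Hom_R(C,M)\cong (C/xC)\otimes_{R/xR}\Hom_{R/xR}(C/xC,M)$, which identifies $\xi^C_M$ with $\xi^{C/xC}_M$.

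The only non--bookkeeping step is the compatibility of the natural maps $\delta$, $\gamma$, $\xi$ with the isomorphisms supplied by Fact \ref{c}, and I expect this to be the main (though routine) obstacle: one must unwind the construction of the isomorphisms in Fact \ref{c} (which traces back to Fact \ref{a} and the short exact sequences $0\to yC\to C\to C/xC\to 0$ and $0\to xC\to C\to C/yC\to 0$) and verify that evaluation maps intertwine as expected. No further input is needed beyond Facts \ref{a}, \ref{b}, \ref{c} and Proposition \ref{B}; the finiteness hypothesis in (i) is used only to land inside $\mathcal G_{C/xC}(R/xR)$, since $\mathcal G$--classes by definition consist of finite modules.
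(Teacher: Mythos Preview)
Your argument is correct and entirely self-contained: every condition defining $\mathcal G_C$, $\mathcal A_C$, $\mathcal B_C$ is transported to the $R/xR$--level via Fact~\ref{c}, with Proposition~\ref{B} guaranteeing that $C/xC$ is semidualizing. The only point you leave to the reader is the compatibility of $\delta$, $\gamma$, $\xi$ with the identifications, and you are right that this is routine naturality.

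The paper proceeds differently. Rather than verifying the definitions directly, it invokes general transfer theorems of Christensen \cite{c}: Theorem~6.5 there says (in the module case) that for a finite $S$--module $M$ over a quotient $R\to S$ with $S\in\mathcal G_C(R)$, one has $M\in\mathcal G_C(R)$ iff $M\in\mathcal G_{C\otimes_R S}(S)$; Proposition~5.3 there gives the analogous statement for Auslander and Bass classes when $S\in\mathcal A_C(R)$. The paper's Propositions~\ref{A} and~\ref{C} supply exactly these hypotheses for $S=R/xR$, after which the corollary is immediate. So the paper's proof is a two-line citation, while yours reproves the special case of Christensen's transfer results from scratch using Fact~\ref{c}. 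Your approach has the advantage of being independent of \cite{c} and of making transparent exactly which isomorphisms are in play; the paper's approach is shorter and situates the result within a known framework.
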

\begin{proof}
(i) holds by \cite[Theorem 6.5]{c} and Proposition \ref{A}.
$ {\rm (ii)}\ $ and $ {\rm (iii)}\ $ follow from \cite[Proposition 5.3]{c} and Proposition \ref{C}.
\end{proof}

The following example shows that, for an $R$--module $M$ and an
exact zero--divisor $x$ on both $R$ and $C$, the assumption $M\in
\mathcal G_C(R)$ does not necessarily imply that $M/xM \in \mathcal
G_{C/xC}(R/xR)$.
\begin{exmp}\cite[Example 5.4.14]{w}\label{exa1}
Let $k$ be a field,  and set $R=k[[X,  Y]]/(XY)$. Now $x,  y$ form a pair of exact zero--divisors on $R$.
Thus $M=R/yR$ belongs to $ \mathcal G_R(R)$ by Proposition \ref{A}.
As $M/xM\cong k$,  $R/xR \cong k[[Y]]$ and $ \Ext^1_{k[[Y]]}(k, k[[Y]])\cong k$ is non--zero, $M/xM\not\in \mathcal G_{R/xR}(R/xR)$.
\end{exmp}

We do not know whether, for an $R$--module $M$, $M\in \mathcal
G_C(R)$ implies that $M/xM \in \mathcal G_{C/xC}(R/xR)$,  whenever
$x$ is an exact zero--divisor on all $R$,  $C$ and $M$. But there is
a partial converse in the following result.
\begin{prop}\label{D}
Assume that $M$ is an $R$--module and that $x,  y$ form a pair of exact zero--divisors on $R$,  $C$ and $M$.
\begin{itemize}
\item[(i)] If $M/xM \in \mathcal A_{C/xC}(R/xR)$ and $M/yM \in \mathcal A_{C/yC}(R/yR)$,  then $M \in \mathcal A_C(R)$.
\item[(ii)] If $M/xM \in \mathcal B_{C/xC}(R/xR)$ and $M/yM \in \mathcal B_{C/yC}(R/yR)$,  then $M \in \mathcal B_C(R)$.
\item[(iii)] If $M$ is finite,  $M/xM \in \mathcal G_{C/xC}(R/xR)$ and $M/yM \in \mathcal G_{C/yC}(R/yR)$,  then $M \in \mathcal G_C(R)$.
\end{itemize}
\end{prop}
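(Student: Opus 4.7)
The plan is to exploit the short exact sequence
$$0 \longrightarrow M/yM \longrightarrow M \longrightarrow M/xM \longrightarrow 0$$
available because $x,y$ form a pair of exact zero--divisors on $M$ (the same sequence used, e.g., in the proof of Proposition \ref{B}). The strategy is uniform across the three parts: apply the appropriate functor to this sequence, use Fact \ref{c} to convert the Tor-- or Ext--vanishing hypotheses about $M/xM$ over $R/xR$ (and $M/yM$ over $R/yR$) into vanishing statements over $R$, propagate through the resulting long exact sequences to obtain the vanishing for $M$, and finally invoke the five lemma on a commutative ladder built from the relevant natural transformation to conclude that the middle vertical map is an isomorphism.

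For part (i), apply $C\otimes_R-$ to the sequence above. By Fact \ref{c}(iii) (with $C$ playing the role of ``$M$'' in the statement of that fact), together with the canonical identification $C\otimes_R M/xM\cong C/xC\otimes_{R/xR} M/xM$, the groups ${\rm Tor}^R_{>0}(C,M/xM)$ and ${\rm Tor}^R_{>0}(C,M/yM)$ vanish, so ${\rm Tor}^R_{>0}(C,M)=0$ and the tensored sequence is again short exact. Applying ${\rm Hom}_R(C,-)$ to the tensored sequence and using Fact \ref{c}(ii) in the same manner yields ${\rm Ext}^{>0}_R(C,C\otimes_R M)=0$ together with a short exact sequence
$$0\to {\rm Hom}_R(C,C\otimes_R M/yM)\to {\rm Hom}_R(C,C\otimes_R M)\to {\rm Hom}_R(C,C\otimes_R M/xM)\to 0.$$
This sequence fits into a commutative ladder with the original one, linked vertically by the maps $\gamma^C$; the outer vertical maps correspond, under the identifications above, to $\gamma^{C/xC}_{M/xM}$ and $\gamma^{C/yC}_{M/yM}$, which are isomorphisms by hypothesis, so the five lemma forces $\gamma^C_M$ to be an isomorphism as well.

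Parts (ii) and (iii) proceed in the same spirit with the roles of the functors adjusted. For (ii), apply ${\rm Hom}_R(C,-)$ first (using Fact \ref{c}(ii)) to get ${\rm Ext}^{>0}_R(C,M)=0$ and an induced short exact sequence of Homs, then apply $C\otimes_R-$ to this sequence (using Fact \ref{c}(iii) together with the identification ${\rm Hom}_R(C,M/xM)\cong {\rm Hom}_{R/xR}(C/xC,M/xM)$) to deduce ${\rm Tor}^R_{>0}(C,{\rm Hom}_R(C,M))=0$, and conclude by the five lemma applied to $\xi^C$. For (iii), apply ${\rm Hom}_R(-,C)$ twice to the short exact sequence, invoking Fact \ref{c}(i) each time to reduce ${\rm Ext}^{>0}_R(M/xM,C)$ and ${\rm Ext}^{>0}_R({\rm Hom}_R(M/xM,C),C)$ (and their counterparts for $M/yM$) to vanishing Ext groups over $R/xR$ and $R/yR$; the five lemma applied to $\delta^C$ then finishes the argument. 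The main obstacle throughout is bookkeeping: at each step one must verify that the natural transformations $\gamma^C$, $\xi^C$ and $\delta^C$ correspond, under the identifications supplied by Fact \ref{c} and standard adjunctions, to the analogous transformations over $R/xR$ and $R/yR$, so that the hypotheses on $M/xM$ and $M/yM$ actually translate into isomorphism statements on the outer vertical maps of each ladder.
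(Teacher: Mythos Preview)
Your proposal is correct and follows essentially the same approach as the paper: the paper proves (i) by applying $C\otimes_R-$ and then ${\rm Hom}_R(C,-)$ to the short exact sequence $0\to M/yM\to M\to M/xM\to 0$, invoking Fact~\ref{c} exactly as you describe to obtain the required Tor/Ext vanishing, and then concluding via the five lemma on the ladder built from $\gamma^C$; for (ii) and (iii) the paper simply states that the proofs are similar, and your outlines for those parts are the natural analogues.
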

\begin{proof}
(i). As $x, y$ form a pair of exact zero--divisors on $M$,  there is an exact sequence
\begin{equation}\label{e2}{0 \longrightarrow  M/yM \longrightarrow M \longrightarrow M/xM \longrightarrow 0}.
\end{equation}
Let $z\in \{x, y\}$. As $x, y$ form a pair of exact zero-divisors on $C$,  $${\rm Tor}^R_i(C, M/zM)\cong {\rm Tor}^{R/zR}_i(C/zC,  M/zM)=0$$ for all $i>0$,  by Fact \ref{c}.
By applying $C\otimes_R-$ to (\ref{e2}),  we get ${\rm Tor}^R_{>0}(C, M)=0$. In particular,  one has the exact sequence
\begin{equation}\label{e3}{0\longrightarrow  C\otimes_R M/yM \longrightarrow C\otimes_R M \longrightarrow C\otimes_R M/xM \longrightarrow 0}.
\end{equation}
Also
$$\begin{array}{llll}
{\rm Ext}^{i}_{R}(C,  C\otimes_R M/zM) \hspace{-0.25cm}& \cong {\rm Ext}^{i}_{R/zR}(C/zC,  C\otimes_R M/zM)\\
                                      \hspace{-0.25cm}& \cong {\rm Ext}^{i}_{R/zR}(C/zC,  C/zC\otimes_{R/zR} M/zM)=0
\end{array}$$
for all $i>0$,  by Fact \ref{c}.
So by applying ${\rm Hom}_{R} (C,  -)$ to (\ref{e3}),  we get ${\rm Ext}^{>0}_{R}(C,  C\otimes_R M)=0$ and the following commutative diagram
{\small $$\begin{array}{lllllllll}
0 \hspace{-0.25cm}& \longrightarrow \Hom_{R}(C,  C\otimes_R M/yM) \hspace{-0.25cm}& \longrightarrow \Hom_{R}(C,  C\otimes_R M) \hspace{-0.25cm}& \longrightarrow  \Hom_{R} (C,  C\otimes_R M/xM) \hspace{-0.25cm}&\longrightarrow  0 \\
  &   \begin{array}{ll} \ \ \ \ \ \ \ \ \ \ \ \ \ \ \ \ {\Bigg\uparrow} \cong \end{array} &  \begin{array}{ll} \ \ \ \ \ \ \ \ \ \ \ \ \ \ \ {\Bigg\uparrow} {\gamma ^C_M}\end{array} & \begin{array}{ll}\ \ \ \ \ \ \ \ \ \ \ \ \ \ \ \ \ \ {\Bigg\uparrow}\cong\end{array} &  \\
0 \hspace{-0.25cm}& \longrightarrow \ \ \ \ \ \ \ \ \ M/yM & \longrightarrow\ \ \ \ \ \ \ \ \ \  M &\longrightarrow\ \ \ \ \ \ \ \ \ \ \ M/xM &\longrightarrow  0 \\
\end{array}$$ }
Now the result follows.

The proofs of (ii) and (iii) are similar.
\end{proof}

We end this section with our main result indicating when $M/xM$ belongs to the classes $\mathcal{G}_C$, $\mathcal{B}_C$, or $\mathcal{A}_C$.
\begin{prop}\label{J}
Assume that $M$ is an $R$--module and that $x,  y$ form a pair of exact zero--divisors on both $R$ and $M$.
\begin{itemize}
\item[(i)] If $M \in \mathcal G_C$,  then $M/xM \in \mathcal G_C$ if and only if $x,  y$ form also a pair of exact zero--divisors on
${\rm Hom}_R(M,  C)$.
\item[(ii)] If $M \in \mathcal B_C$,  then $M/xM \in \mathcal B_C$ if and only if $x,  y$ form also a pair of exact zero--divisors on
${\rm Hom}_R(C,  M)$.
\item[(iii)] If  $M \in \mathcal A_C$ is finite ,  then $M/xM \in \mathcal A_C$ if and only if $x,  y$ form also a pair of exact zero--divisors on $C\otimes_R M$.
\end{itemize}
\end{prop}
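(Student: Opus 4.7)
The plan is to handle all three parts by a common template: set $N$ to be the companion module---$\Hom_R(M,C)$ in (i), $\Hom_R(C,M)$ in (ii), $C\otimes_R M$ in (iii)---and translate the defining conditions of the relevant class for $M/xM$ into the Ext/Tor-vanishing characterization of ``$x,y$ form a pair of exact zero-divisors on $N$'' supplied by Fact~\ref{b}. I would prove the parts in the order (i), (iii), (ii).

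For part (i), the key identification is $\Ext^i_R(M/xM,C)\cong\Ext^i_R(R/xR,N)$ for all $i\ge 0$, obtained from the derived Hom--tensor adjunction
\[
\uhom_R\bigl(R/xR,\uhom_R(M,C)\bigr)\simeq\uhom_R\bigl(R/xR\utp_R M,\,C\bigr)
\]
together with $\Tor^R_{>0}(R/xR,M)=0$ (Fact~\ref{b}) and $\Ext^{>0}_R(M,C)=0$ (from $M\in\mathcal{G}_C$). The ``only if'' direction then reduces to Fact~\ref{b} once one checks the non-triviality $xN\ne 0$ and $N/xN\ne 0$: both follow from the iso $M\cong\Hom_R(N,C)$, since $xN=0$ would force $x$ to annihilate $\Hom_R(N,C)\cong M$ and $N/xN=0$ would force $0:_M x\cong\Hom_R(N/xN,C)=0$, each contradicting the exact-zero-divisor hypothesis on $M$. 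For ``if'', the same identification gives $\Ext^{>0}_R(M/xM,C)=0$, Fact~\ref{a} identifies $\Hom_R(M/xM,C)$ with $N/xN$, and the symmetric instance of the identification with $M$ and $N$ interchanged---legitimate because $\Hom_R(N,C)\cong M$ and $x,y$ are now exact zero-divisors on $N$---furnishes $\Ext^{>0}_R(N/xN,C)=0$ and realises $\delta^C_{M/xM}$ as an isomorphism.

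Part (iii) is strictly dual with $K=C\otimes_R M$ in place of $N$: the identification $\Tor^R_i(C,M/xM)\cong\Tor^R_i(K,R/xR)$ comes from associativity of $\utp_R$ and $\Tor^R_{>0}(C,M)=0$; the ``only if'' side and non-triviality are handled as in (i); and for ``if'' the residual $\Ext^{>0}_R(C,K/xK)=0$ follows from the long exact sequence of $\Hom_R(C,-)$ applied to $0\to K/yK\to K\to K/xK\to 0$, using $\Ext^{>0}_R(C,K)=0$ and a two-step periodicity forced by the symmetric role of $x$ and $y$. Part (ii) requires a slightly different treatment since nothing is assumed about $(x,y)$ on $C$. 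Setting $L=\Hom_R(C,M)$, I would work with the explicit identifications $0:_L x\cong\Hom_R(C,M/xM)$ and $0:_L y\cong\Hom_R(C,M/yM)$ induced by the isomorphism $yM\cong M/xM$ of Fact~\ref{a} (requiring only that $x,y$ are exact zero-divisors on $M$); applying $\Hom_R(C,-)$ to $0\to M/xM\to M\to M/yM\to 0$ and using $\Ext^{>0}_R(C,M)=0$ then produces
\[
0\longrightarrow 0:_L x\longrightarrow L\xrightarrow{\,x\,} 0:_L y\longrightarrow \Ext^1_R(C,M/xM)\longrightarrow 0.
\]
For the ``only if'' direction, one first upgrades $M/xM\in\mathcal{B}_C$ to $M/yM\in\mathcal{B}_C$ via the long exact sequences of $\uhom_R(C,-)$ and $-\utp_R C$ applied to the same short exact sequence, then reads off $xL=0:_L y$ and (symmetrically) $yL=0:_L x$ from the displayed sequence, yielding $x,y$ as exact zero-divisors on $L$. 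For the ``if'' direction, Fact~\ref{a} gives $0:_L x\cong L/xL$, and the three $\mathcal{B}_C$-conditions on $M/xM$ follow: $\Ext^{>0}_R(C,M/xM)=0$ from the displayed sequence becoming short exact, $\Tor^R_{>0}(C,L/xL)=0$ from applying $C\otimes_R-$ to $0\to L/yL\to L\to L/xL\to 0$ (using $L\in\mathcal{A}_C$), and $\xi^C_{M/xM}$ by naturality.

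The main obstacle throughout is the bookkeeping needed to verify that the abstract isomorphisms produced by Ext/Tor computations coincide with the specific natural maps $\delta^C$, $\gamma^C$, $\xi^C$ built into the definitions of $\mathcal{G}_C$, $\mathcal{A}_C$, $\mathcal{B}_C$. This is a routine but careful diagram chase; particular care is required in part (ii), where the symmetric role of $x$ and $y$ must be invoked on both sides of the equivalence and one cannot appeal to Fact~\ref{c} to simplify Ext and Tor groups involving $C$.
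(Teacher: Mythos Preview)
Your treatment of (i) is correct and coincides with the paper's: the identification $\Ext^i_R(M/xM,C)\cong\Ext^i_R(R/xR,N)$ is precisely the paper's equation~(\ref{e4}), obtained there via a collapsing Grothendieck spectral sequence rather than derived adjunction, and the remainder proceeds as you outline.

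The difficulty lies in the ``periodicity'' steps you invoke for the residual vanishing conditions. In~(iii), applying $\Hom_R(C,-)$ to the two short exact sequences $0\to K/yK\to K\to K/xK\to 0$ and $0\to K/xK\to K\to K/yK\to 0$ with $\Ext^{>0}_R(C,K)=0$ yields only $\Ext^i_R(C,K/xK)\cong\Ext^{i+2}_R(C,K/xK)$ for $i\ge 1$; two-periodicity is no obstruction to these groups being nonzero, and nothing you have written supplies the base case $\Ext^1=0$. The same defect afflicts the $\Tor$-vanishing in your~(ii): applying $C\otimes_R-$ to $0\to L/yL\to L\to L/xL\to 0$ with $L\in\mathcal{A}_C$ gives $\Tor^R_i(C,L/xL)\cong\Tor^R_{i+2}(C,L/xL)$, and the base case $\Tor^R_1(C,L/xL)=\ker(C\otimes_R L/yL\to C\otimes_R L)=0$ would require identifying $C\otimes_R L/yL$ with a submodule of $M$, which is essentially the $\xi^C$-condition you are trying to establish. (Your four-term displayed sequence \emph{does} supply the base case $\Ext^1_R(C,M/xM)=0$ once $xL=0:_Ly$, so the $\Ext$-half of~(ii) survives; it is the $\Tor$-half of~(ii) and the $\Ext$-half of~(iii) that are incomplete.) The claim that~(iii) is ``strictly dual'' to~(i) also overstates matters: the $\mathcal G_C$-conditions involve only the single functor $\Hom_R(-,C)$, whereas $\mathcal A_C$ mixes $C\otimes_R-$ with $\Hom_R(C,-)$, so no simple duality transports the argument.

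The paper avoids these issues by working one level higher. For~(ii) it establishes directly, via a double-complex argument, that $\Ext^n_R(C,M/xM)\cong\Ext^n_R(R/xR,L)$ for all $n$, and via a second collapsing spectral sequence that $\Tor^R_n(C,L/xL)\cong\Tor^R_n(C\otimes_R L,R/xR)=\Tor^R_n(M,R/xR)$; Fact~\ref{b} then gives the full vanishing at once, with no induction needed. For~(iii) the paper does not argue directly at all: since $K=C\otimes_R M\in\mathcal B_C$ with $\Hom_R(C,K)\cong M$, the hypotheses of~(ii) are satisfied by $K$, whence $K/xK\cong C\otimes_R(M/xM)\in\mathcal B_C$, and Foxby equivalence \cite[Theorem~2.8]{tw} gives $M/xM\in\mathcal A_C$. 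Your missing base cases can in fact be supplied---for instance, $\Ext^1_R(C,K/xK)$ is the cokernel of $M\cong\Hom_R(C,K)\xrightarrow{\,x\,}0:_M y$, which vanishes because $x,y$ are exact zero-divisors on $M$---but filling them in amounts to rerunning the~(ii) argument with $K$ in place of $M$; the reduction via Foxby equivalence is both shorter and conceptually cleaner.
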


\begin{proof}
(i). As $M\in \mathcal G_C$,  we have $M\otimes_R P\in \mathcal G_C$ for every finite projective $R$--module $P$ and so
${\rm Ext}^{>0}_R(M\otimes_R P,  C)=0$. Thus,  by \cite[Theorem 10.62]{r},  there is a spectral sequence
$${\rm E}_2^{p, q}={\rm Ext}_R^p({\rm Tor}^R_q(M,  R/xR),  C)\underset{p}{\Rightarrow} {\rm Ext}_R^n(R/xR, {\rm Hom}_R(M,  C)).$$
By Fact \ref{b}, ${\rm Tor}^R_{>0}(M, R/xR)=0$. Hence
\begin{equation}\label{e4}{{\rm Ext}_R^n(M/xM,  C)\cong {\rm Ext}_R^n(R/xR,  {\rm Hom}_R(M,  C))\ {\rm for\ all}\ n\geq 0}.
\end{equation}
As $M\in \mathcal G_C$, it follows that ${\rm Hom}_R(M, C)\overset{x}{\longrightarrow}{\rm Hom}_R(M, C)$ is neither zero nor isomorphism.
Assume that $M/xM \in \mathcal G_C$ then,  by (\ref{e4}) and Fact \ref{b},  $x, y$ form a pair of exact zero--divisors on ${\rm Hom}_R(M, C)$.
For the converse,  ${\rm Ext}_R^{>0}(M/xM, C)=0$,  by Fact \ref{b} and (\ref{e4}).
In order to obtain the other conditions, we consider the isomorphisms
$$\begin{array}{llll}
M/xM \cong {\rm Hom}_R(R/xR, M) \hspace{-0.25cm}&  \cong  {\rm Hom}_R(R/xR, {\rm Hom}_R({\rm Hom}_R(M, C), C)) \\
                                \hspace{-0.25cm} &  \cong  {\rm Hom}_R(R/xR\otimes_R {\rm Hom}_R(M, C), C) \\
                                 \hspace{-0.25cm} &  \cong  {\rm Hom}_R({\rm Hom}_R(R/xR, {\rm Hom}_R(M, C)), C) \\
                                 \hspace{-0.25cm} &  \cong  {\rm Hom}_R({\rm Hom}_R(M/xM, C), C), \\
\end{array}$$
where the first and forth isomorphisms are from Fact \ref{a} and assumptions; the second isomorphism follows from $M \in \mathcal{G}_C$; and the third and the last ones are Hom-tensor adjointness.
Replacing $M$ with ${\rm Hom}_R(M, C)$ in (\ref{e4}), implies
${\rm Ext}_R^n({\rm Hom}_R(M,  C)\otimes_R R/xR,  C)\cong {\rm Ext}_R^n(R/xR,  {\rm Hom}_R({\rm Hom}_R(M,  C),  C)\cong {\rm Ext}_R^n(R/xR,  M)=0$
for all $n>0$.
As $x, y$ form a pair of exact zero--divisors on ${\rm Hom}_R(M,  C)$,  one has
$${\rm Hom}_R(M,  C)\otimes_R R/xR\cong {\rm Hom}_R(R/xR,  {\rm Hom}_R(M,  C))\cong {\rm Hom}_R(M/xM,  C).$$
Therefore ${\rm Ext}_R^{>0}({\rm Hom}_R(M/xM, C), C)=0$.

(ii). Let $P_{\bullet}: \cdots \longrightarrow P_1\longrightarrow P_0\longrightarrow C\longrightarrow 0$ be a projective resolution of $C$.
Consider the third quadrant bicomplex $W={\rm Hom}_R(P_{\bullet}, {\rm Hom}_R(F_{\bullet}, M))$, where
$F_{\bullet}: \cdots \longrightarrow R \stackrel{y}{\longrightarrow } R\stackrel{x}{\longrightarrow }R\longrightarrow  R/xR\longrightarrow  0$
is the free resolution of $R/xR$.
Let $ {\rm ^{I} E}$ and ${\rm ^{II} E}$ denote the spectral sequences determined by the first filtration and second filtration of ${\rm Tot}(W)$,  respectively.
Thus ${\rm ^{I} E}_2^{p,  q}={\rm Ext}^p_R(C,  {\rm Ext}^q_R(R/xR,  M))\underset{p}{\Rightarrow} H^n({\rm Tot}(W))$.
As,  by Fact \ref{b}, ${\rm Ext}^{>0}_R(R/xR,  M)=0$ one has ${\rm ^{I} E}_2^{n,  0}\cong {\rm Ext}^n_R(C,  M/xM)\cong H^n({\rm Tot}(W))$ for all $n\geq 0$.
On the other hand there is an isomorphism of bicomplexes ${\rm Hom}_R(P_{\bullet},  {\rm Hom}_R(F_{\bullet},  M))\cong {\rm Hom}_R(F_{\bullet},  {\rm Hom}_R(P_{\bullet},  M))$. Thus ${\rm ^{II} E}_2^{p,  q}\cong {\rm Ext}^p_R(R/xR,  {\rm Ext}^q_R(C,  M))\underset{p}{\Rightarrow} H^n({\rm Tot}(W))$.
As $M \in \mathcal{B}_C$,  we have ${\rm Ext}^{>0}_R(C,  M)=0$ and so
${\rm ^{II} E}_2^{n,  0}\cong {\rm Ext}^n_R(R/xR,  {\rm Hom}_R(C,  M))\cong H^n({\rm Tot}(W))$ for all $n\geq 0$.
Therefore
\begin{equation}\label{e5}{{\rm Ext}^n_R(C,  M/xM)\cong{\rm Ext}^n_R(R/xR,  {\rm Hom}_R(C,  M))\ {\rm for\ all}\ n\geq 0}.
\end{equation}
As $M \in \mathcal{B}_C$, $C\otimes_R {\rm Hom}_R(C, M)\cong M$, therefore ${\rm Hom}_R(C, M)\overset{x}{\longrightarrow}{\rm Hom}_R(C, M)$ is neither zero nor surjective.
Now assume that $M/xM \in \mathcal{B}_C$ then $x, y$ form a pair of exact zero--divisors on ${\rm Hom}_R(C, M)$,  by (\ref{e5}) and Fact \ref{b}.
For the converse,  ${\rm Ext}^{>0}_R(C,  M/xM)=0$,  by Fact \ref{b} and (\ref{e5}).

For the remaining conditions, we first consider the following isomorphisms
$$\begin{array}{llll}
M/xM \cong R/xR\otimes_R M \hspace{-0.25cm}&  \cong  R/xR\otimes_R C\otimes_R {\rm Hom}_R(C,  M)\\
                           \hspace{-0.25cm}&  \cong  C\otimes_R {\rm Hom}_R(R/xR,  {\rm Hom}_R(C,  M)) \\
                           \hspace{-0.25cm}&  \cong  C\otimes_R {\rm Hom}_R(C,  {\rm Hom}_R(R/xR,  M)) \\
                           \hspace{-0.25cm}&  \cong  C\otimes_R {\rm Hom}_R(C,  M/xM),  \\
\end{array}$$
where the second isomorphism is from $M \in \mathcal{B}_C$;  the third and the last isomorphisms are obtained by Fact \ref{a}.
As $M \in \mathcal{B}_C$,  one has ${\rm Hom}_R(C, M)\in \mathcal{A}_C$ (see\cite[Theorem 2.8]{tw}) and so
${\rm Hom}_R(C, M)\otimes_R P\in \mathcal A_C$ which gives ${\rm Tor}_{>0}^R(C,  {\rm Hom}_R(C,  M)\otimes_R P)=0$
for every projective $R$--module $P$.
By \cite[Theorem 10.59]{r},  there is a first quadrant spectral sequence
$${\rm E}_{p, q}^2={\rm Tor}_p^R(C, {\rm Tor}_q^R({\rm Hom}_R(C, M), R/xR))\underset{p}{\Rightarrow}{\rm Tor}_n^R(C\otimes_R {\rm Hom}_R(C, M), R/xR).$$
As $x$ is an exact zero--divisor on ${\rm Hom}_R(C, M)$, ${\rm Tor}_{>0}^R({\rm Hom}_R(C,  M),  R/xR)=0$,  by Fact \ref{b}. Therefore, for all $n\geq 0$, one has
\begin{equation}\label{e6}{{\rm Tor}_n^R(C,  {\rm Hom}_R(C,  M)\otimes_R R/xR)\cong {\rm Tor}_n^R(C\otimes_R {\rm Hom}_R(C,  M),  R/xR)}.
\end{equation}
As $x,  y$ form a pair of exact zero--divisors on both $M$ and ${\rm Hom}_R(C,  M)$,  we have
$$\begin{array}{llll}
{\rm Hom}_R(C,  M)\otimes_R R/xR \hspace{-0.25cm}&\cong {\rm Hom}_R(R/xR, {\rm Hom}_R(C, M))\\
                                 \hspace{-0.25cm}&\cong {\rm Hom}_R(C, {\rm Hom}_R(R/xR, M))\\
                                  \hspace{-0.25cm}&\cong {\rm Hom}_R(C, M/xM). \\
\end{array}$$
Hence, for all $n>0$, we have
$$\begin{array}{llll}
{\rm Tor}_n^R(C,  {\rm Hom}_R(C,  M/xM)) \hspace{-0.25cm}&\cong {\rm Tor}_n^R(C\otimes_R {\rm Hom}_R(C,  M),  R/xR)\\
                                         \hspace{-0.25cm}&\cong {\rm Tor}_n^R(M,  R/xR)=0,\\
\end{array} $$
which fulfils the requirement.

(iii). As $M \in \mathcal{A}_C$ is finite,  the map $C\otimes_R M \overset{x}{\longrightarrow}C\otimes_R M$ is neither zero nor isomorphism.
Also,  for every projective $R$--module $P$,  $M \otimes_R P\in \mathcal{A}_C$ and so ${\rm Tor}_{>0}^R(C,  M \otimes_R P)=0$.
By \cite[Theorem 10.59]{r},  there is a first quadrant spectral sequence
$${\rm E}_{p,  q}^2={\rm Tor}_p^R(C,  {\rm Tor}_q^R(M,  R/xR))\underset{p}{\Rightarrow} {\rm Tor}_n^R(C\otimes_R M,  R/xR).$$
As, by  Fact \ref{b},  ${\rm Tor}_{>0}^R(M,  R/xR)=0$, ${\rm Tor}_n^R(C,  M/xM)\cong {\rm Tor}_n^R(C\otimes_R M,  R/xR)$ for all $n\geq 0$.

Assume that $M/xM \in \mathcal{A}_C$. Now it follows that $x,  y$ form a pair of exact zero--divisors on $C\otimes_R M$,  by Fact \ref{b}.
For the converse,  from $M \in \mathcal{A}_C$ we have $C\otimes_R M \in \mathcal{B}_C$,  by \cite[Theorem 2.8]{tw}.
Since $x,  y$ form a pair of exact zero--divisors on both $C\otimes_R M$ and ${\rm Hom}_R(C,  C\otimes_R M)\cong M$,  one has
$C\otimes_R M/xM \cong (C\otimes_R M)/{x(C\otimes_R M)}\in \mathcal{B}_C$,  by (ii). Thus $M/xM\in \mathcal{A}_C$,  by \cite[Theorem 2.8]{tw}.
\end{proof}


\section{The classes ${\mathcal P}_C$,  ${\mathcal F}_C$  and ${\mathcal I}_C$} \label{sec5}

We observed in Fact \ref{b} and \cite[Proposition 2.18]{dg}(a) that, if $x, y$ form a pair of exact zero--divisors on $R$ and if $M$ is an $R$--module such that $M\overset{x}{\longrightarrow}M$ is neither zero nor epimorphism, then $x, y$ form also a pair of exact zero--divisors on $M$ whenever one of the conditions ${\rm id}(M)<\infty$,  ${\rm pd}(M)<\infty$,  or ${\rm fd}(M)<\infty$ holds true.
In this section, we provide a positive answer to the question whether the same property is true if one replaces the above homological dimension by
${\mathcal I}_C$--{\rm id}$(M)$, ${\mathcal P}_C$--{\rm pd}$(M)$, or ${\mathcal F}_C$--{\rm pd}$(M)$, respectively.

\begin{prop} \label{E}
Assume that $x, y$ form a pair of exact zero--divisors on both $R$ and $C$. Let $M$ be an $R$--module such that $M\overset{x}{\longrightarrow}M$ is neither zero nor epimorphism.
Then $M$ admits $x, y$ as a pair of exact zero-divisors if it is either in
${\mathcal I}_C(R)$,  ${\mathcal P}_C(R)$,  or ${\mathcal F}_C(R)$. In any such case $M/xM$ belongs to ${\mathcal I}_{C/xC}(R/xR)$,  ${\mathcal P}_{C/xC}(R/xR)$,  or ${\mathcal F}_{C/xC}(R/xR)$, respectively.
\end{prop}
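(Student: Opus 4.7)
The proof naturally splits into two parts, corresponding to the two conclusions. For the first claim---that $x, y$ form a pair of exact zero-divisors on $M$---the hypothesis that $M \overset{x}{\longrightarrow} M$ is neither zero nor surjective is precisely condition (a) of Fact \ref{b}. Hence it suffices to verify the Tor- or Ext-vanishing condition (ii)/(iii) there, using the existing fact that $x,y$ is a pair of exact zero-divisors on $C$ (which already delivers ${\rm Tor}^R_{>0}(R/xR, C) = 0$ via Fact \ref{b}).

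My plan would be to treat the $\mathcal{P}_C$ and $\mathcal{F}_C$ cases together by writing $M = C\otimes_R N$ with $N$ flat (taking $N = P$ in the projective case). Flatness of $N$ lets it pass through Tor, giving
$${\rm Tor}^R_i(R/xR, M) \cong {\rm Tor}^R_i(R/xR, C)\otimes_R N,$$
which vanishes in positive degrees. For $\mathcal{I}_C$, write $M = {\rm Hom}_R(C, I)$ with $I$ injective; applying Hom-tensor adjunction to a free resolution of $R/xR$ and exploiting the exactness of ${\rm Hom}_R(-, I)$ yields
$${\rm Ext}^i_R(R/xR, {\rm Hom}_R(C, I)) \cong {\rm Hom}_R({\rm Tor}^R_i(R/xR, C), I),$$
which vanishes for $i>0$ by the same input. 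Fact \ref{b} then promotes these vanishings to the exact-zero-divisor conclusion.

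For the second claim, I would compute $M/xM$ directly, now that Fact \ref{a} is available. In the $\mathcal{F}_C$/$\mathcal{P}_C$ cases,
$$M/xM \;\cong\; (C\otimes_R N) \otimes_R R/xR \;\cong\; (C/xC) \otimes_{R/xR} (N/xN),$$
and $N/xN$ is flat (respectively projective) over $R/xR$ by standard base change, so $M/xM$ lands in $\mathcal{F}_{C/xC}(R/xR)$ (resp.\ $\mathcal{P}_{C/xC}(R/xR)$). In the $\mathcal{I}_C$ case, Fact \ref{a} together with Hom-tensor adjunction gives
$$M/xM \;\cong\; {\rm Hom}_R(R/xR, M) \;\cong\; {\rm Hom}_R(C/xC, I) \;\cong\; {\rm Hom}_{R/xR}\bigl(C/xC,\, {\rm Hom}_R(R/xR, I)\bigr),$$
and ${\rm Hom}_R(R/xR, I)$ is injective over $R/xR$ since ${\rm Hom}_R(R/xR, -)$ is right adjoint to the exact restriction-of-scalars functor. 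Hence $M/xM \in \mathcal{I}_{C/xC}(R/xR)$.

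The principal (mild) obstacle is the $\mathcal{I}_C$ case: one has to be careful about the order of operations, since the identification $M/xM \cong {\rm Hom}_R(R/xR, M)$ used in Part 2 depends on $x$ already being an exact zero-divisor on $M$, so Part 1 must be completed first. Apart from this bookkeeping and the standard change-of-rings fact for injectives, everything reduces to routine applications of the tools already gathered in Section \ref{sec2}.
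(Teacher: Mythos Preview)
Your proposal is correct and follows essentially the same route as the paper. The paper only writes out the $\mathcal{I}_C$ case explicitly and leaves the others implicit; your argument for that case matches theirs (Ext--Tor duality against an injective, then Fact~\ref{b}), with the only cosmetic difference being that you reach $M/xM$ via Fact~\ref{a} as $\Hom_R(R/xR,M)$, whereas the paper starts from $R/xR\otimes_R M$ and invokes Hom evaluation before using Fact~\ref{a} on $C$.
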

\begin{proof}
We prove the case $M\in {\mathcal I}_C(R)$. In this case, $M=\Hom_R(C, I)$ for some injective $R$--module $I$. By \cite[Corollary 10.63]{r},  we have
$ {\rm Ext}_R^i(R/xR,  {\rm Hom}_R(C,  I))\cong {\rm Hom}_R({\rm Tor}_i^R(R/xR,  C),  I)$
for all $i\geq 0$. Now by Fact \ref{b},  ${\rm Tor}_{>0}^R(R/xR,  C)=0$ and thus ${\rm Ext}_R^{>0}(R/xR, M)=0$.
Therefore $x, y$ form a pair of exact zero-divisors on $M$.
Now we have
$$\begin{array}{llll}
R/xR\otimes_R {\rm Hom}_R(C, I) \hspace{-0.25cm}& \cong {\rm Hom}_R({\rm Hom}_R(R/xR, C), I)\\
 \hspace{-0.25cm}& \cong {\rm Hom}_R(C\otimes_R R/xR, I)\\
 \hspace{-0.25cm}& \cong {\rm Hom}_R(C, {\rm Hom}_R(R/xR,  I))\\
 \hspace{-0.25cm}& \cong {\rm Hom}_{R/xR}(C/xC, {\rm Hom}_R(R/xR, I)),
\end{array}$$
where the first and the third isomorphisms follow from the Hom evaluation homomorphism and adjointness,  respectively.
The second and the last ones hold,  by Facts \ref{a} and \ref{c},  respectively.
Note that ${\rm Hom}_R(R/xR,  I)$ is an injective $R/xR$--module and that $C/xC$ is a semidualizing $R/xR$--module (Proposition \ref{B}).
Hence $M/xM \in  \mathcal {I}_{C/xC}(R/xR)$.
\end{proof}

The following result is consistent with \cite[Proposition 2.18]{dg}(a).

\begin{prop}\label{F}
Assume that $x,  y$ form a pair of exact zero--divisors on both $R$ and $C$.
Let $M$ be an $R$--module such that $xM\neq 0$ and $xM\neq M$.
If ${\mathcal P}_C$--{\rm pd}$(M)$,  ${\mathcal F}_C$--{\rm pd}$(M)$,  or ${\mathcal I}_C$--{\rm id}$(M)$ is finite,
then $x,  y$ form a pair of exact zero--divisors on $M$.
\end{prop}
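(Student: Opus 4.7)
The plan is to convert the statement into a vanishing result and then invoke Fact \ref{b}. Since $xM \neq 0$ and $xM \neq M$, Fact \ref{b} reduces the claim to proving either ${\rm Ext}^{>0}_R(R/xR, M) = 0$ or ${\rm Tor}^R_{>0}(R/xR, M) = 0$ (these are equivalent under our hypothesis). I would establish the appropriate vanishing by induction on the finite dimension in each of the three cases.

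For the base case, each ground class is treated separately. For $N \in \mathcal{I}_C$ the computation in the proof of Proposition \ref{E} directly gives ${\rm Ext}^{>0}_R(R/xR, N) = 0$. For $N = C \otimes_R P \in \mathcal{P}_C$ with $P$ projective, one has ${\rm Ext}^{>0}_R(R/xR, C) = 0$ by Fact \ref{b} (since $x, y$ are exact zero-divisors on $C$), and a direct-summand-of-free argument (valid since $R/xR$ is finitely presented over the noetherian ring $R$) extends this to $N$. For $N = C \otimes_R F \in \mathcal{F}_C$ with $F$ flat, ${\rm Tor}^R_i(R/xR, C \otimes_R F) \cong {\rm Tor}^R_i(R/xR, C) \otimes_R F = 0$ for $i > 0$, by flatness of $F$ and Fact \ref{b}.

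For the inductive step in the $\mathcal{P}_C$--{\rm pd} and $\mathcal{F}_C$--{\rm pd} cases, I would take an honest short exact sequence $0 \to M' \to X_0 \to M \to 0$ with $X_0$ in the ground class and with $M'$ of strictly smaller dimension; the long exact sequence of ${\rm Ext}$ (resp.\ ${\rm Tor}$) then yields the desired vanishing for $M$. For the $\mathcal{I}_C$--{\rm id} case I would dually use a cosyzygy $0 \to M \to Y^0 \to M'' \to 0$ with $Y^0 \in \mathcal{I}_C$ and with $\mathcal{I}_C$--{\rm id} of $M''$ strictly smaller; here it is cleaner to work with ${\rm Tor}$ rather than ${\rm Ext}$ (to sidestep a potential obstruction at degree $1$ in the Ext long exact sequence), observing that the base-case Ext vanishing on $Y^0$ passes to Tor vanishing by the equivalence (ii)$\Leftrightarrow$(iii) of Fact \ref{b}. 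The main technical obstacle is to secure an \emph{honest} exact (co)resolution rather than merely a proper one at each inductive step; this is standard, since finiteness of $\mathcal{P}_C$--{\rm pd} or $\mathcal{F}_C$--{\rm pd} forces $M \in \mathcal{B}_C$, while finiteness of $\mathcal{I}_C$--{\rm id} forces $M \in \mathcal{A}_C$, and on these classes the proper (co)resolutions of Remark \ref{e} are genuinely exact by the results of \cite{hw, tw}.
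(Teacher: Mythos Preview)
Your approach matches the paper's: it packages the vanishing as a separate lemma (Lemma~\ref{H}) showing that if $\mathcal{P}_C$--${\rm pd}(M)\le n$ (resp.\ $\mathcal{F}_C$--${\rm pd}(M)\le n$, $\mathcal{I}_C$--${\rm id}(M)\le n$) then ${\rm Tor}^R_{>n}(R/xR,M)=0$ (resp.\ ${\rm Tor}^R_{>n}=0$, ${\rm Ext}_R^{>n}=0$), proved by exactly your induction, and then invokes Fact~\ref{b}. One small imprecision in your write-up: the induction you describe only yields vanishing in degrees above the dimension $n$, not above $0$ as you state; this is still enough for Fact~\ref{b}, and once you aim for ${\rm Ext}^{>n}=0$ rather than ${\rm Ext}^{>0}=0$ in the $\mathcal{I}_C$ case, the ``degree~$1$ obstruction'' disappears and the switch to ${\rm Tor}$ becomes unnecessary.
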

The proof follows by the following lemma and Fact \ref{b}.

\begin{lem}\label{H}
Assume that $M$ is an $R$--module and that $x, y$ form a pair of exact zero--divisors on both $R$ and $C$.
Let $n$ be a non-negative integer. Then the following statements hold true.
\begin{itemize}
\item[(i)]  If ${\mathcal P}_C$--{\rm pd}$(M)\leq n$,  then ${\rm Tor}^R_{>n}(R/xR,  M)=0$.
\item[(ii)] If ${\mathcal F}_C$--{\rm pd}$(M)\leq n$,  then ${\rm Tor}^R_{>n}(R/xR,  M)=0$.
\item[(iii)] If ${\mathcal I}_C$--{\rm id}$(M)\leq n$,  then ${\rm Ext}_R^{>n}(R/xR,  M)=0$.
\end{itemize}
\end{lem}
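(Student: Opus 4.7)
My plan is to prove each of (i), (ii), (iii) by induction on $n$, with the base case $n=0$ following from a flat/projective/injective base-change argument combined with the Tor/Ext vanishing provided by Fact \ref{b}. For the base case of (i), $M \in \mathcal{P}_C$ means $M \cong C \otimes_R P$ for some projective $P$; since $P$ is flat, tensoring a flat resolution of $R/xR$ with $P$ preserves exactness, whence ${\rm Tor}^R_i(R/xR, C \otimes_R P) \cong {\rm Tor}^R_i(R/xR, C) \otimes_R P$, which vanishes for $i>0$ by Fact \ref{b}. Part (ii) is identical, with \emph{flat} in place of \emph{projective}. For the base case of (iii), $M \cong {\rm Hom}_R(C, I)$ with $I$ injective, and the Hom-evaluation isomorphism ${\rm Ext}^i_R(R/xR, {\rm Hom}_R(C, I)) \cong {\rm Hom}_R({\rm Tor}^R_i(R/xR, C), I)$ from \cite[Corollary 10.63]{r} vanishes for $i>0$ by the same Fact \ref{b}---this is exactly the computation already carried out in the proof of Proposition \ref{E}.

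For the inductive step of (i), I would use that finite $\mathcal{P}_C$-projective dimension forces $M \in \mathcal{B}_C$ (a standard consequence of the Takahashi--White theory in \cite{tw}); the proper augmented $\mathcal{P}_C$-projective resolution $X^+$ is then an honest exact complex of $R$-modules, because applying ${\rm Hom}_R(C,-)$ to it produces a genuine projective resolution of ${\rm Hom}_R(C, M) \in \mathcal{A}_C$, and tensoring back with $C$ recovers $X^+$ intact (using $C \otimes_R {\rm Hom}_R(C, M) \cong M$ and the Tor-vanishing guaranteed by $\mathcal{A}_C$). This lets me break off a short exact sequence $0 \to M' \to C \otimes_R P_0 \to M \to 0$ with $\mathcal{P}_C$-pd$(M') \leq n-1$, and the long exact sequence in Tor, combined with the base case applied to $C \otimes_R P_0$ and the inductive hypothesis applied to $M'$, gives ${\rm Tor}^R_{>n}(R/xR, M) = 0$. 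Parts (ii) and (iii) are analogous, using for (iii) the dual short exact sequence $0 \to M \to {\rm Hom}_R(C, I^0) \to N \to 0$ with $\mathcal{I}_C$-id$(N) \leq n-1$ and the long exact sequence in Ext.

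The main obstacle I anticipate is precisely the verification that the proper resolutions used to define $\mathcal{P}_C$-pd, $\mathcal{F}_C$-pd and $\mathcal{I}_C$-id are genuinely exact complexes of $R$-modules (and not merely ``proper''), so that the short-exact-sequence-and-long-exact-sequence machinery is available. Overcoming this rests on the fact that finite homological dimension in any of these three senses forces membership in $\mathcal{B}_C$ or $\mathcal{A}_C$, after which exactness drops out of the standard Hom/tensor duality between those two classes; once this is in hand, the induction is routine.
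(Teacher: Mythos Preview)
Your proposal is correct and follows essentially the same line as the paper: induction on $n$, with the base case handled exactly as you describe (including the same Hom-evaluation isomorphism for (iii)), and the inductive step via a short exact sequence $0 \to N \to C\otimes_R P_0 \to M \to 0$ and the long exact sequence in Tor (dually for (iii)).

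The only difference is that you spend effort on what you call the ``main obstacle''---arguing via $M \in \mathcal{B}_C$ that the proper $\mathcal{P}_C$-resolution is genuinely exact---whereas the paper bypasses this entirely by citing \cite[Corollary 2.10]{tw} (and \cite[Proposition 5.2]{swty} for (ii)), which directly supplies the short exact sequence with $\mathcal{P}_C$--${\rm pd}(N)\leq n-1$. Your detour through the Foxby equivalence is valid and is in fact how such results are proved in \cite{tw}, but it is already packaged there, so you are re-deriving a cited lemma rather than taking a genuinely different route.
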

\begin{proof}
(i). Let $M\neq 0$. We prove by induction on $n$. If $n=0$ then $M=C\otimes_R P$ for some projective $R$--module $P$
and thus ${\rm Tor}^R_i(R/xR,  C\otimes_R P)\cong {\rm Tor}^R_i(R/xR,  C)\otimes_R P$ for all $i\geq 0$.
As, by Fact \ref{b}, ${\rm Tor}^R_{>0}(R/xR,  C)=0$, we have ${\rm Tor}^R_{>0}(R/xR,  M)=0$.

Let $n>0$. By \cite[Corollary 2.10]{tw},  there exists an exact sequence
$$0\longrightarrow N \longrightarrow C\otimes_R P\longrightarrow M \longrightarrow 0, $$
 where $P$ is projective $R$--module and ${\mathcal P}_C$--{\rm pd}$(N)\leq n-1$.
Now, the long exact sequence
$$ {\rm Tor}^R_i(R/xR,  C\otimes_R P)\longrightarrow {\rm Tor}^R_i(R/xR,  M)\longrightarrow{\rm Tor}^R_{i-1}(R/xR,  N)$$
and our induction hypothesis imply ${\rm Tor}^R_{>n}(R/xR, M)=0$.

The proof of (ii) is a modification of the proof of (i) for which we use \cite[Proposition 5.2]{swty}.
The proof of (iii) is similar to (i).
\end{proof}

Our final result is consistent with \cite[Proposition 2.18]{dg}.
\begin{prop}\label{G}
Assume that $x, y$ form a pair of exact zero--divisors on both $R$ and $C$.
Let $M$ be an $R$--module such that $xM\neq 0$ and $xM\neq M$. Set $\overline{(-)} = (-)\otimes_R R/xR$.
The following statements hold true.
\begin{itemize}
\item[(i)]  If ${\mathcal P}_C$--{\rm pd}$(M)<\infty$,  then ${\mathcal P}_{\overline{C}}$--{\rm pd}$(\overline{M})\leq {\mathcal P}_C$--{\rm pd}$(M)$.
\item[(ii)] If ${\mathcal F}_C$--{\rm pd}$(M)<\infty$,  then ${\mathcal F}_{\overline{C}}$--{\rm pd}$(\overline{M})\leq {\mathcal F}_C$--{\rm pd}$(M)$.
\item[(iii)] If $M$ is finite with ${\mathcal I}_C$--{\rm id}$(M)<\infty$,  then ${\mathcal I}_{\overline{C}}$--{\rm id}$(\overline{M})\leq {\mathcal I}_C$--{\rm id}$(M)$.
\item[(iv)] If $R$ is local and $M$ is finite,  then equality holds in {\rm(i),  (ii)} and {\rm (iii)}.
\end{itemize}
\end{prop}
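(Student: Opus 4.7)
The plan is, for parts (i)--(iii), to transfer a resolution of $M$ over $R$ to a resolution of $\overline{M}$ over $R/xR$ by tensoring with (resp. Hom-ing from) $R/xR$, and to deduce (iv) via depth formulas under the local--finite hypothesis. First, by Proposition \ref{F}, the finiteness of any of $\mathcal{P}_C$--{\rm pd}$(M)$, $\mathcal{F}_C$--{\rm pd}$(M)$, or $\mathcal{I}_C$--{\rm id}$(M)$, together with $xM \neq 0$ and $xM \neq M$, forces $x,y$ to be an exact zero-divisor pair on $M$. Hence $\overline{M} \neq 0$ and by Fact \ref{a} one has $\overline{M} \cong {\rm Hom}_R(R/xR, M)$; Proposition \ref{B} guarantees that $\overline{C}$ is semidualizing over $R/xR$. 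The key vanishings, supplied by Fact \ref{b}, are ${\rm Tor}^R_{>0}(R/xR, M) = {\rm Tor}^R_{>0}(R/xR, C) = 0$ and ${\rm Ext}^{>0}_R(R/xR, M) = {\rm Ext}^{>0}_R(R/xR, C) = 0$.

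For (i) (resp.~(ii)), fix an augmented proper $\mathcal{P}_C$- (resp.~$\mathcal{F}_C$-)projective resolution $X:\ 0 \to C \otimes_R Q_n \to \cdots \to C \otimes_R Q_0 \to M \to 0$ with $n = \mathcal{P}_C$--{\rm pd}$(M)$ (resp.~$\mathcal{F}_C$--{\rm pd}$(M)$). Apply $-\otimes_R R/xR$ termwise; each $C \otimes_R Q_i$ becomes $\overline{C}\otimes_{R/xR}(Q_i/xQ_i)$, which lies in $\mathcal{P}_{\overline{C}}$ (resp.~$\mathcal{F}_{\overline{C}}$) because $Q_i/xQ_i$ is projective (resp.~flat) over $R/xR$. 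The tensored complex remains exact: ${\rm Tor}^R_{>0}(R/xR, C \otimes_R Q_i) = 0$ (from ${\rm Tor}^R_{>0}(R/xR, C) = 0$ and flatness of $Q_i$) and ${\rm Tor}^R_{>0}(R/xR, M) = 0$, so its homology computes ${\rm Tor}^R_\bullet(R/xR, M)$, which vanishes in positive degrees. Properness over $R/xR$ is inherited from properness of $X$ over $R$ via Fact \ref{c}. Hence $\mathcal{P}_{\overline{C}}$--{\rm pd}$(\overline{M}) \leq n$, and (ii) follows identically.

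Part (iii) is dual: take an augmented proper $\mathcal{I}_C$-injective resolution $Y:\ 0 \to M \to {\rm Hom}_R(C, I^0) \to \cdots \to {\rm Hom}_R(C, I^n) \to 0$ with $n = \mathcal{I}_C$--{\rm id}$(M)$, and apply ${\rm Hom}_R(R/xR, -)$. Each ${\rm Hom}_R(C, I^i)$ becomes ${\rm Hom}_{R/xR}(\overline{C}, {\rm Hom}_R(R/xR, I^i))$ via Hom-tensor adjunction, and lies in $\mathcal{I}_{\overline{C}}$ since ${\rm Hom}_R(R/xR, I^i)$ is injective over $R/xR$. Exactness follows from ${\rm Ext}^{>0}_R(R/xR, {\rm Hom}_R(C, I^i)) \cong {\rm Hom}_R({\rm Tor}^R_{>0}(R/xR, C), I^i) = 0$ (by \cite[Corollary 10.63]{r}) together with ${\rm Ext}^{>0}_R(R/xR, M) = 0$, and the augmentation term is $\overline{M}$ by Fact \ref{a}. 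This gives $\mathcal{I}_{\overline{C}}$--{\rm id}$(\overline{M}) \leq n$.

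For (iv), the reverse inequalities are needed, and this is the main obstacle. The plan is to apply the Auslander-Buchsbaum/Bass-type identities, valid for finite modules of finite dimension over a local ring, namely $\mathcal{P}_C$--{\rm pd}$(M) = \mathcal{F}_C$--{\rm pd}$(M) = {\rm depth}(R) - {\rm depth}(M)$ and $\mathcal{I}_C$--{\rm id}$(M) = {\rm depth}(R)$, and the analogous identities over $R/xR$ with respect to $\overline{C}$. The comparison then reduces to the depth equalities ${\rm depth}_R(R) = {\rm depth}_{R/xR}(R/xR)$ and ${\rm depth}_R(M) = {\rm depth}_{R/xR}(\overline{M})$ in the exact zero-divisor setting (cf.~\cite[Corollary 2.14, Proposition 2.18]{dg}). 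The delicate step is justifying these depth-preservation statements when $x$ is only an exact zero-divisor (not a regular element), and confirming that the cited depth formulas hold verbatim for the semidualizing classes; once both are secured, the reverse inequalities, and hence equality in (i)--(iii), follow.
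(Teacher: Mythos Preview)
Your route for (i)--(iii) is genuinely different from the paper's.  The paper never manipulates a $\mathcal P_C$-- (or $\mathcal F_C$--, $\mathcal I_C$--) resolution directly.  Instead it uses the Takahashi--White identities $\mathcal P_C\text{--pd}(M)={\rm pd}_R({\rm Hom}_R(C,M))$, $\mathcal F_C\text{--pd}(M)={\rm fd}_R({\rm Hom}_R(C,M))$, $\mathcal I_C\text{--id}(M)={\rm id}_R(C\otimes_R M)$ from \cite[Theorem 2.11]{tw} and \cite[Proposition 5.2]{swty} (valid since $M\in\mathcal B_C$, resp.\ $\mathcal A_C$), then applies the classical inequality from \cite[Proposition 2.18]{dg} to $N={\rm Hom}_R(C,M)$ or $N=C\otimes_R M$.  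The only extra ingredient is the base-change identification $\overline{{\rm Hom}_R(C,M)}\cong{\rm Hom}_{\overline R}(\overline C,\overline M)$ (and its tensor analogue), which follows from Facts \ref{a} and \ref{c} once one knows $x,y$ are exact zero--divisors on ${\rm Hom}_R(C,M)$ (resp.\ $C\otimes_R M$); that comes for free from finiteness of the classical dimension.  Part (iv) is then immediate, because \cite[Proposition 2.18]{dg} already contains the equality statement in the local finite case---no depth formulas are needed.

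Your direct approach has a real gap at the properness step.  Exactness of $\overline{X^+}$ is fine (the $C\otimes_R Q_i$ are ${\rm Tor}^R_{>0}(R/xR,-)$--acyclic, so the truncated complex does compute ${\rm Tor}^R_\bullet(R/xR,M)$).  But the sentence ``properness over $R/xR$ is inherited from properness of $X$ via Fact \ref{c}'' does not go through: Fact \ref{c} only rewrites ${\rm Hom}_{\overline R}(\overline C,\overline{X^+})$ termwise as ${\rm Hom}_R(C,\overline{X^+})$, and you must still show this complex is exact, i.e.\ that ${\rm Ext}^1_R(C,\overline{K_i})=0$ for every syzygy $\overline{K_i}$.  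That is not automatic and, when you try to verify it, you run into needing $\overline M\in\mathcal B_{\overline C}$ (or $M/yM\in\mathcal B_{C/yC}$)---exactly the sort of thing you are trying to prove.  A clean fix is to use the two-out-of-three property of $\mathcal B_{\overline C}$ on the short exact sequences of $\overline{X^+}$, starting from the left end, to force $\overline M\in\mathcal B_{\overline C}$; then \cite[Theorem 2.11]{tw} gives the bound without a separate properness verification.  But notice that this fix is essentially the paper's reduction in disguise, so the Takahashi--White route is both shorter and avoids the circularity.  The same comment applies to your argument for (iii), and your plan for (iv) via Auslander--Buchsbaum/Bass formulas, while workable, is considerably more laborious than simply quoting the equality case of \cite[Proposition 2.18]{dg} after the Takahashi--White conversion.
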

\begin{proof}
(i). As ${\mathcal P}_C$--{\rm pd}$(M)$ is finite,  one has $M\in {\mathcal B}_C$,  by \cite[Corollary 2.9]{tw}. By Proposition \ref{F}, $x, y$ form a pair of exact zero--divisors on $M$ and thus the map ${\rm Hom}_R(C, M)\overset{x}{\longrightarrow}{\rm Hom}_R(C, M)$ is neither zero nor surjective.
On the other hand, by \cite[Theorem 2.11]{tw},  ${\rm pd}_R({\rm Hom}_R(C, M))=\mathcal P_C$--{\rm pd}$(M)$ is finite.
Therefore $x, y$ form a pair of exact zero--divisors on ${\rm Hom}_R(C, M)$ and
${\rm pd}_{\overline{R}}(\overline{{\rm Hom}_R(C, M)})\leq {\rm pd}_R({\rm Hom}_R(C, M))$,  by \cite[Proposition 2.18]{dg}.
Now we have
$$\begin{array}{llll}
\overline{\Hom_R(C, M)}\hspace{-0.25cm}&\cong \Hom_R({\overline{R}},  \Hom_R(C, M))\\
\hspace{-0.25cm}&\cong \Hom_R(\overline{C}, M)\\
\hspace{-0.25cm}&\cong\Hom_{\overline{R}}(\overline{C}, \overline{M}),\\
\end{array}$$
where the first and second isomorphisms hold by Fact \ref{a} and adjointness. The third isomorphism follows from Fact \ref{c} and Proposition \ref{F}.
Therefore \cite[Theorem 2.11]{tw} will result
$$\begin{array}{llll}
\mathcal P_{\overline{C}}\hspace{-0.05cm}-\hspace{-0.1cm}{\rm pd}(\overline{M})\hspace{-0.25cm}&={\rm pd}_{\overline{R}}(\Hom_{\overline{R}}(\overline{C}, \overline{M}))\\
 \hspace{-0.25cm}& = {\rm pd}_{\overline{R}}(\overline{{\rm Hom}_R(C, M)})\\
 \hspace{-0.25cm}& \leq {\rm pd}_R({\rm Hom}_R(C, M))\\
 \hspace{-0.25cm}&={\mathcal P}_C\hspace{-0.1cm}-\hspace{-0.1cm}{\rm pd}(M).
\end{array}$$
(ii). As ${\mathcal F}_C$--{\rm pd}$(M)<\infty$,  one has $M\in {\mathcal B}_C$,  by \cite[Lemma 5.1]{swty}, and so the map ${\rm Hom}_R(C, M)\overset{x}{\longrightarrow}{\rm Hom}_R(C, M)$ is neither zero nor surjective.
By \cite[Proposition 5.2]{swty},  ${\rm fd}_R({\rm Hom}_R(C, M))={\mathcal F}_C$--{\rm pd}$(M)$ is finite.
Thus $x, y$ form a pair of exact zero--divisors on ${\rm Hom}_R(C, M)$ and
${\rm fd}_{\overline{R}}(\overline{{\rm Hom}_R(C, M)})\leq {\rm fd}_R({\rm Hom}_R(C, M))$,  by \cite[Proposition 2.18]{dg}.
Therefore
$$\begin{array}{llll}
{\mathcal F}_{\overline{C}}\hspace{-0.05cm}-\hspace{-0.1cm}{\rm pd}(\overline{M})\hspace{-0.25cm}&={\rm fd}_{\overline{R}}(\Hom_{\overline{R}}(\overline{C}, \overline{M}))\\
 \hspace{-0.25cm}& ={\rm fd}_{\overline{R}}(\overline{{\rm Hom}_R(C, M)})\\
 \hspace{-0.25cm}& \leq{\rm fd}_R({\rm Hom}_R(C, M))\\
 \hspace{-0.25cm}&={\mathcal F}_C\hspace{-0.1cm}-\hspace{-0.1cm}{\rm pd}(M).
\end{array}$$
(iii). As ${\mathcal I}_C$--{\rm id}$(M)<\infty$,  one has $M\in
{\mathcal A}_C$,  by \cite[Corollary 2.9]{tw}. By Proposition
\ref{F}, $x, y$ form a pair of exact zero--divisors on $M$. It
follows that the map $C\otimes_R M
\overset{x}{\longrightarrow}C\otimes_R M$ is neither zero nor
isomorphism. By \cite[Theorem 2.11]{tw},  ${\rm id}_R(C\otimes_R
M)={\mathcal I}_C$--{\rm id}$(M)$ is finite. Then  $x, y$ form a
pair of exact zero--divisors on $C\otimes_R M$ and ${\rm
id}_{\overline{R}}(\overline{C\otimes_R M})\leq {\rm
id}_R(C\otimes_R M)$,  by \cite[Proposition 2.18]{dg}. By
\cite[Theorem 2.11]{tw}, one has $ {\mathcal I}_{\overline{C}}$
--${\rm id}(\overline{M})= {\rm
id}_{\overline{R}}(\overline{C}\otimes_{\overline{R}}
\overline{M})={\rm id}_{\overline{R}}(\overline{C\otimes_R M})$.
Thus ${\mathcal I}_{\overline{C}}$ --${\rm id}(\overline{M})
 \leq{\rm id}_R(C\otimes_R M)={\mathcal I}_C$ --${\rm id}(M).
$

(iv). The equalities follow by \cite[Proposition 2.18]{dg} and the
proofs of (i), (ii) and (iii), respectively.
\end{proof}


{\bf Acknowledgement.} The authors thanks Sean Sather-Wagsaff for reading the manuscript and his communications. They are also grateful to the referee for her/his invaluable comments.



\end{document}